\definecolor{darkgreen}{rgb}{0,0.45,0}
\newcommand{\cat}[1]{\mathbf{#1}}
\newcommand{\op}{\mathrm{op}}
\newcommand{\thg}{{\mathord{\text{--}}}}
\renewcommand{\c}{,\,}
\DeclareMathAlphabet      {\mathbf}{OT1}{cmr}{b}{n}
\newcommand{\dcat}[1]{\cat{\mathbb #1}}
\newcommand{\cd}[2][]{\vcenter{\hbox{\xymatrix#1{#2}}}}
\def\matrixobject@{%
   \edef \next@{={\DirectionfromtheDirection@ }}%
   \expandafter \toks@ \next@ \plainxy@
   \let\xy@@ix@=\xyq@@toksix@
   \xyFN@ \OBJECT@}
\let\xy@entry@@norm=\entry@@norm
\def\entry@@norm@patched{%
   \let\object@=\matrixobject@
   \xy@entry@@norm }
\renewcommand{\phi}{\varphi}
\newcommand{\A}{{\mathcal A}}
\newcommand{\B}{{\mathcal B}}
\newcommand{\C}{{\mathcal C}}
\newcommand{\D}{{\mathcal D}}
\newcommand{\E}{{\mathcal E}}
\newcommand{\J}{{\mathcal J}}
\renewcommand{\L}{{\mathcal L}}
\newcommand{\M}{{\mathcal M}}
\newcommand{\R}{{\mathcal R}}
\newcommand{\xtor}[1]{\cdl[@1]{{} \ar[r]|-{\object@{|}}^{#1} & {}}}
\newcommand{\setmanuallabel}[1]{\stepcounter{equation}{\edef\@currentlabel{\theequation}\label{#1}}}
\newcommand{\printmanuallabel}[1]{\stepcounter{equation}\text{(\theequation)}}
\def\hookleftarrowfill@{\arrowfill@\leftarrow\relbar{\relbar\joinrel\rhook}}
\def\twoheadleftarrowfill@{\arrowfill@\twoheadleftarrow\relbar\relbar}
\def\leftbararrowfill@{\arrowdoublefill@{\leftarrow\mkern-5mu}\relbar\mapstochar\relbar\relbar}
\def\Leftbararrowfill@{\arrowdoublefill@{\Leftarrow\mkern-2mu}\Relbar\Mapstochar\Relbar\Relbar}
\def\leftringarrowfill@{\arrowdoublefill@{\leftarrow\mkern-3mu}\relbar{\mkern-3mu\circ\mkern-2mu}\relbar\relbar}
\def\lefttriarrowfill@{\arrowfill@{\mathrel\triangleleft\mkern0.5mu\joinrel\relbar}\relbar\relbar}
\def\Lefttriarrowfill@{\arrowfill@{\mathrel\triangleleft\mkern1mu\joinrel\Relbar}\Relbar\Relbar}
\def\hookrightarrowfill@{\arrowfill@{\lhook\joinrel\relbar}\relbar\rightarrow}
\def\twoheadrightarrowfill@{\arrowfill@\relbar\relbar\twoheadrightarrow}
\def\rightbararrowfill@{\arrowdoublefill@{\relbar\mkern-0.5mu}\relbar\mapstochar\relbar\rightarrow}
\def\Rightbararrowfill@{\arrowdoublefill@{\Relbar\mkern-2mu}\Relbar\Mapstochar\Relbar\Rightarrow}
\def\rightringarrowfill@{\arrowdoublefill@\relbar\relbar{\mkern-2mu\circ\mkern-3mu}\relbar{\mkern-3mu\rightarrow}}
\def\righttriarrowfill@{\arrowfill@\relbar\relbar{\relbar\joinrel\mkern0.5mu\mathrel\triangleright}}
\def\Righttriarrowfill@{\arrowfill@\Relbar\Relbar{\Relbar\joinrel\mkern1mu\mathrel\triangleright}}
\def\leftrightarrowfill@{\arrowfill@\leftarrow\relbar\rightarrow}
\def\mapstofill@{\arrowfill@{\mapstochar\relbar}\relbar\rightarrow}
\renewcommand*\xleftarrow[2][]{\ext@arrow 20{20}0\leftarrowfill@{#1}{#2}}
\providecommand*\xLeftarrow[2][]{\ext@arrow 60{22}0{\Leftarrowfill@}{#1}{#2}}
\providecommand*\xhookleftarrow[2][]{\ext@arrow 10{20}0\hookleftarrowfill@{#1}{#2}}
\providecommand*\xtwoheadleftarrow[2][]{\ext@arrow 60{20}0\twoheadleftarrowfill@{#1}{#2}}
\providecommand*\xleftbararrow[2][]{\ext@arrow 10{22}0\leftbararrowfill@{#1}{#2}}
\providecommand*\xLeftbararrow[2][]{\ext@arrow 50{24}0\Leftbararrowfill@{#1}{#2}}
\providecommand*\xleftringarrow[2][]{\ext@arrow 10{26}0\leftringarrowfill@{#1}{#2}}
\providecommand*\xlefttriarrow[2][]{\ext@arrow 80{24}0\lefttriarrowfill@{#1}{#2}}
\providecommand*\xLefttriarrow[2][]{\ext@arrow 80{24}0\Lefttriarrowfill@{#1}{#2}}
\renewcommand*\xrightarrow[2][]{\ext@arrow 01{20}0\rightarrowfill@{#1}{#2}}
\providecommand*\xRightarrow[2][]{\ext@arrow 04{22}0{\Rightarrowfill@}{#1}{#2}}
\providecommand*\xhookrightarrow[2][]{\ext@arrow 00{20}0\hookrightarrowfill@{#1}{#2}}
\providecommand*\xtwoheadrightarrow[2][]{\ext@arrow 03{20}0\twoheadrightarrowfill@{#1}{#2}}
\providecommand*\xrightbararrow[2][]{\ext@arrow 01{22}0\rightbararrowfill@{#1}{#2}}
\providecommand*\xRightbararrow[2][]{\ext@arrow 04{24}0\Rightbararrowfill@{#1}{#2}}
\providecommand*\xrightringarrow[2][]{\ext@arrow 01{26}0\rightringarrowfill@{#1}{#2}}
\providecommand*\xrighttriarrow[2][]{\ext@arrow 07{24}0\righttriarrowfill@{#1}{#2}}
\providecommand*\xRighttriarrow[2][]{\ext@arrow 07{24}0\Righttriarrowfill@{#1}{#2}}
\providecommand*\xmapsto[2][]{\ext@arrow 01{20}0\mapstofill@{#1}{#2}}
\providecommand*\xleftrightarrow[2][]{\ext@arrow 10{22}0\leftrightarrowfill@{#1}{#2}}
\providecommand*\xLeftrightarrow[2][]{\ext@arrow 10{27}0{\Leftrightarrowfill@}{#1}{#2}}
\newcommand{\twocong}[2][0.5]{\ar@{}[#2] \save ?(#1)*{\cong}\restore}
\newcommand{\twoeq}[2][0.5]{\ar@{}[#2] \save ?(#1)*{=}\restore}
\newcommand{\rtwocell}[3][0.5]{\ar@{}[#2] \ar@{=>}?(#1)+/l 0.2cm/;?(#1)+/r 0.2cm/^{#3}}
\newcommand{\ltwocell}[3][0.5]{\ar@{}[#2] \ar@{=>}?(#1)+/r 0.2cm/;?(#1)+/l 0.2cm/^{#3}}
\newcommand{\ltwocello}[3][0.5]{\ar@{}[#2] \ar@{=>}?(#1)+/r 0.2cm/;?(#1)+/l 0.2cm/_{#3}}
\newcommand{\dtwocell}[3][0.5]{\ar@{}[#2] \ar@{=>}?(#1)+/u  0.2cm/;?(#1)+/d 0.2cm/^{#3}}
\newcommand{\dltwocell}[3][0.5]{\ar@{}[#2] \ar@{=>}?(#1)+/ur  0.2cm/;?(#1)+/dl 0.2cm/^{#3}}
\newcommand{\drtwocell}[3][0.5]{\ar@{}[#2] \ar@{=>}?(#1)+/ul  0.2cm/;?(#1)+/dr 0.2cm/^{#3}}
\newcommand{\dthreecell}[3][0.5]{\ar@{}[#2] \ar@3{->}?(#1)+/u  0.2cm/;?(#1)+/d 0.2cm/^{#3}}
\newcommand{\utwocell}[3][0.5]{\ar@{}[#2] \ar@{=>}?(#1)+/d 0.2cm/;?(#1)+/u 0.2cm/_{#3}}
\newcommand{\dtwocelltarg}[3][0.5]{\ar@{}#2 \ar@{=>}?(#1)+/u  0.2cm/;?(#1)+/d 0.2cm/^{#3}}
\newcommand{\utwocelltarg}[3][0.5]{\ar@{}#2 \ar@{=>}?(#1)+/d  0.2cm/;?(#1)+/u 0.2cm/_{#3}}
\theoremstyle{definition}
\theoremstyle{plain}
\newtheorem{Thm}{Theorem}
\newtheorem{Prop}[Thm]{Proposition}
\newtheorem{Lemma}[Thm]{Lemma}
\numberwithin{equation}{section}
\theoremstyle{definition}
\newtheorem{Defn}[Thm]{Definition}
\newtheorem{Exs}[Thm]{Examples}
\newtheorem{Rk}[Thm]{Remark}
\renewcommand{\l}[1]{L{#1}}
\renewcommand{\r}[1]{R{#1}}
\newcommand{\m}[1]{\mu_{#1}}
\renewcommand{\c}[1]{\Delta_{#1}}
\newcommand{\lp}[1]{L'{#1}}
\newcommand{\rp}[1]{R'{#1}}
\newcommand{\Coalg}[1]{\mathsf{#1}\text-\cat{Coalg}}
\newcommand{\Alg}[1]{\mathsf{#1}\text-\cat{Alg}}
\newcommand{\DCoalg}[1]{\mathsf{#1}\text-\mathbb C\cat{oalg}}
\newcommand{\DAlg}[1]{\mathsf{#1}\text-\mathbb A\cat{lg}}
\newcommand{\Left}[1]{\cat{Left}\mathsf{#1}}
\newcommand{\Right}[1]{\cat{Right}\mathsf{#1}}
\newcommand{\Sem}{\cat{Sem}}
\newcommand{\Sq}[1]{\mathbb S\cat{q}(#1)}
\newcommand{\atwo}{{\mathbf 2}}
\newcommand{\alg}[1]{\boldsymbol{#1}}
\newcommand{\awfs}{awfs }
\newcommand{\Awfs}[1]{{\cat{AWFS}(#1)}}
\newcommand{\Oawfs}[1]{{\cat{LAWFS}(#1)}}
\newcommand{\Pawfs}[1]{{\cat{PAWFS}(#1)}}
\newcommand{\Cawfs}[1]{{\cat{LIFT}(#1)}}
\newcommand{\Oplax}{{\cat{AWFS}_\mathrm{oplax}}}
\newcommand{\LOplax}{{\cat{LAWFS}_\mathrm{oplax}}}
\newcommand{\Cat}{{\cat{Cat}}}
\newcommand{\DBL}{{\cat{DBL}}}
\newcommand{\LLP}{\cat{LLP}}
\newcommand{\RLP}{\cat{RLP}}
\newcommand{\CAT}{{\cat{CAT}}}
\newcommand{\Set}{{\cat{Set}}}
\newcommand{\SET}{{\cat{SET}}}
\def\black{\color{black}}
\begin{document}

\leftmargini=2em \title{An orthogonal approach to algebraic weak factorisation systems}

\author{John Bourke}
\address{Department of Mathematics and Statistics, Faculty of Science, Masaryk University, Brno, Czech Republic}
\email{bourkej@math.muni.cz} 
\thanks{The author acknowledges the support of the Grant agency of the Czech Republic under the grant 22-02964S}
\subjclass{18A32, 18N10}

\begin{abstract}
We describe an equivalent formulation of algebraic weak factorisation systems, not involving monads and comonads, but involving double categories of morphisms equipped with a lifting operation satisfying lifting and factorisation axioms.
\end{abstract} 
\date\today
\maketitle

\section{Introduction}

The concept of a factorisation system abstracts the relationship between surjections and injections in algebraic categories.  Originating in the early days of category theory \cite{Duality}, the concept was slowly refined \cite{Isbell57,Kennison68, Ringel70} and the importance of orthogonality gradually came to light: two classes of morphisms $\E$ and $\M$ in a category $\C$ are said to be orthogonal, written $\E \perp \M$, if each commutative square 
\begin{equation}\label{eq:lifting}
\cd{A \ar[d]_{e \in \E} \ar[r]^-{u} &  C \ar[d]^{m \in \M}  \\
B \ar@{.>}[ur]^{\exists !} \ar[r]_-{v}  &D}
\end{equation}
has a unique diagonal filler.  Writing ${\E}^{\perp} = \{f \in \C : \E \perp f \}$ and ${}^{\perp}\M = \{f \in \C: f \perp \M \}$, the orthogonality of $\E$ and $\M$ can be written as $\E \subseteq {}^{\perp}\M$ or, equivalently, $\M  \subseteq {\E}^{\perp}$.  In one of its well known modern formulations \cite{Freyd72}, a factorisation system on $\C$ consists of two classes of morphisms $\E$ and $\M$ such that:
\begin{enumerate}
\item \emph{Axiom of orthogonality:} $\E = {}^{\perp}\M$ and $\M  = {\E}^{\perp}$;
\item \emph{Axiom of factorisation:} each morphism $f \in \C$ is of the form $f = m \circ e$ where $m \in \M$ and $e \in \E$.
\end{enumerate}

Weak factorisation systems were first considered by Quillen \cite{Quillen1967} in his definition of a closed model category, which involves two interconnected weak factorisation systems.  The concept is identical in form to the above one, with the exception that orthogonality is replaced by weak orthogonality (a.k.a. the lifting property): in the commutative square \eqref{eq:lifting}, we require that there exists a diagonal filler as before, but it is not required to be unique.  
This makes the concept of weak factorisation system more expressive, allowing it to capture homotopical structures such as Kan fibrations and Kan complexes which involve liftings that are not unique, but only up to a suitable notion of homotopy.\begin{footnote}{As understood by Quillen from the outset, under mild completeness conditions such a notion of homotopy is intrinsically encoded within the weak factorisation system itself.}\end{footnote}

Dropping uniqueness in orthogonality has, however, some drawbacks.  One loses, for example, the functoriality of factorisations.  This is at odds with the vast majority of examples of weak factorisation systems, which can be made functorial by Quillen's small object argument.  Such issues led category theorists to investigate concepts intermediate between factorisation systems and weak factorisation systems --- see, for instance \cite{Rosicky2002} --- and ultimately led to Grandis and Tholen's natural weak factorisation systems \cite{Grandis2006}, refined by Garner \cite{Garner2011}, and nowadays known as \emph{algebraic weak factorisation systems} (awfs).

The definition of an awfs is rather different in form to those discussed above: it involves a so-called functorial factorisation together with an interacting comonad and monad pair $(\mathsf L,\mathsf R)$ on the arrow category $\C^{\atwo}$ of $\C$ such that a certain natural transformation is a distributive law.  The categories $\Coalg{L}$ and $\Alg{R}$ are then thought of as consisting of the left and right maps of the awfs --- being a left map or right map is no longer a property of a morphism but additional structure.  
In this context, given an $\mathsf L$-coalgebra $\alg f = (f,s)$, an $\mathsf
R$-algebra $\alg g = (g,p)$ and a commuting square
\begin{equation}\label{eq:chosen}
\cd{
A \ar[d]_{f} \ar[rr]^{u}  && C \ar[d]^{g} \\
B \ar@{.>}[urr]|{\Phi_{\alg f, \alg g}(u,v)} \ar[rr]_{v}  && D} \qquad \qquad
\end{equation}
there is a \emph{chosen} diagonal filler, as depicted, and these chosen fillers satisfy a number of compatibilities.  Thus awfs encode something in between orthogonality and weak orthogonality --- they encode a structured form of the lifting property involving chosen diagonal fillers satisfying further compatibilities.  This allows them to capture classical homotopical structures \cite{Garner2011, Riehl2011Algebraic}, as well as structures borne by morphisms such as Grothendieck fibrations \cite{Bourke2016Accessible} and various flavours of structured fibration arising in homotopy type theory \cite{Gambino2017, Berg2020} that lie beyond the scope of factorisation systems and weak factorisation systems.

The goal of this paper is to give a new definition of awfs --- not mentioning functorial factorisations, comonads or monads --- in the same spirit as the definitions of factorisation system and weak factorisation system described above.  As such, the key player in the definition will be a structured form of the lifting property --- namely, the concept of \emph{lifting operation} defined in \cite{Bourke2016Accessible} --- and accordingly we call the concept a \emph{lifting awfs}.  Of course whilst the definition is new, the underlying concept is not --- the main theorem of the paper establishes an equivalence between the categories of awfs on $\C$ and of lifting awfs on $\C$.  One point of view is that an awfs, as classically presented, is a form of theory whilst the associated lifting awfs represents the semantics of the theory.

Before beginning the paper, we will now briefly describe the notion of lifting awfs in detail.  As mentioned above, the chosen fillers in \eqref{eq:chosen} satisfy a number of compatibilities.  These break down into two kinds, horizontal and vertical.  The horizontal compatibilities say that the chosen fillers are natural in morphisms of $\mathsf L$-coalgebras (on the left) and of $\mathsf R$-algebras (on the right).   The vertical compatibilities refer to a vertical composition of $\mathsf L$-coalgebras and of $\mathsf R$-algebras, which extends the categories $\Coalg{L}$ and $\Alg{L}$ over $\C^{\atwo}$ to \emph{double categories} $\DCoalg{L}$ and $\DAlg{R}$ over $\Sq{\C}$, the double category of commutative squares in $\C$; the chosen fillers $\phi$ now form a double-categorical lifting operation of $\DCoalg{L}$ over $\DAlg{R}$ in the sense of \cite{Bourke2016Accessible}.  

Accordingly, we define a lifting awfs on $\C$ to consist of double categories $U \colon  \dcat L \to \Sq{\C}$ and $V \colon  \dcat R \to \Sq{\C}$ over $\Sq{\C}$ together with an $(\dcat L, \dcat R)$-lifting operation $\phi$ such that:
\begin{enumerate}
\item \emph{Axiom of lifting:} the induced $\phi_l \colon  \dcat L \to \LLP(\dcat R)$ and $\phi_r \colon  \dcat R \to \RLP(\dcat L)$ are invertible;
\item \emph{Axiom of factorisation:} each morphism $f \in \C$ admits a bi-universal factorisation $f = V_1 h \circ U_1 g$
where $g$ is a vertical morphism of $\dcat L$ and $h$ is a vertical morphism of $\dcat R$.
\end{enumerate}

Some of the terminology in the above definition has not been fully defined yet, but the reader should observe its similarity to the definitions of factorisation system and weak factorisation system described above.  In Section~\ref{sect:liftingawfs}, we give a self-contained definition of lifting awfs, describe a few examples and develop some of their theory.  By putting the new definition at the start of the paper, without reference to the classical definition of awfs, our aim is to present it as a primitive concept.  In Section~\ref{sect:awfs}, we recall the necessary facts about awfs, in particular the key results about their double categorical semantics developed in \cite{Riehl2011Algebraic,Bourke2016Accessible}.  Using these results heavily, in Section~\ref{sect:equivalence} we prove the equivalence between awfs and lifting awfs. 

\section{Lifting awfs}\label{sect:liftingawfs}

In this section, we begin with a brief introduction to double categories, motivating the concept through the lens of factorisation systems.  We then introduce \cite{Bourke2016Accessible}'s notion of a double-categorical lifting operation, which built on earlier work of Garner \cite{Garner2011} by introducing vertical compatibilities.  Using this concept, we give the definition of lifting awfs.

Our foundational assumptions follow those of \cite{Bourke2016Accessible} --- in particular, 
$\cat{Set}$ and $\cat{SET}$ are the categories of small and large
sets; $\cat{Cat}$ and $\cat{CAT}$ are the categories of small
categories and of locally small categories.  We will treat $\cat{Cat}$ and $\cat{CAT}$ as categories, not as $2$-categories.
Throughout the paper, all categories will be assumed to be locally small  except for ones
whose names, like $\SET$ or $\CAT$, are in capital letters.

\subsection{Double categories of maps}

In the context of classical orthogonal factorisation systems, we have two classes of morphisms $\E$ and $\M$ of a category $\C$, closed under composition and containing the identities.  Let us focus on $\E$.  There are two distinct ways of viewing $\E$ as a category:
\begin{itemize}
\item as the lluf subcategory of $\C$ containing all objects but just those morphisms belonging to $\E$;
\item as the full subcategory of the arrow category $\C^{\atwo}$ whose objects are the morphisms of $\E$.  
\end{itemize}
Both perspectives are important, and it is natural to ask for a single structure encoding both --- this is achieved with the concept of a double category, first introduced in \cite{Ehresmann}.  A double category $\dcat{J}$ can be succinctly defined as an internal category

$$\xy
(0,0)*+{\J_1 \times_{\J_0} \J_1}="c0"; (20,0)*+{\J_{1}}="b0";(40,0)*+{\J_{0}}="a0";
{\ar@<1.5ex>^{d} "b0"; "a0"}; 
{\ar@<0ex>|{i} "a0"; "b0"}; 
{\ar@<-1.5ex>_{c} "b0"; "a0"}; 
{\ar@<0ex>^-{m} "c0"; "b0"}; 
\endxy
$$ in $\CAT$.  In elementary terms, this involves:
\begin{itemize} 
\item \emph{objects} (the objects of $\J_0$);
\item \emph{horizontal arrows} (the arrows of $\J_0$);
\item \emph{vertical arrows} (the objects of $\J_1$, with domain and codomain assigned by $d$ and $c$);
\item \emph{squares} 
$$\xy
(0,0)*+{A}="b0"; (15,0)*+{C}="c0"; (0,-15)*+{B}="d0"; (15,-15)*+{D}="e0";
{\ar ^{r} "b0";"c0"};
{\ar _{f} "b0";"d0"};
{\ar ^{g}"c0";"e0"};
{\ar_{s} "d0";"e0"};
{\ar@{=>}^{\alpha}(5,-7)*+{};(10,-7)*+{}};
\endxy$$
where $f$ and $g$ are vertical morphisms and $r,s$ horizontal.  (In terms of the internal category, $\alpha:f \to g$ is an arrow of $\J_1$ with horizontal domain $r$ and codomain $s$ assigned by $d$ and $c$.)
\end{itemize}
Objects and horizontal arrows form a category ($\J_0$) as do objects and vertical arrows (defined using the internal category maps $m$ and $i$).  Squares can be composed both horizontally (as in $\J_1$) and vertically (using $m$) and both compositions of squares are associative and unital.  Moreover they satisfy an interchange law involving four squares, which amounts to functoriality of $m$.

A double functor $F \colon  \dcat{J} \to \dcat{K}$ is simply an internal functor between internal categories; it preserves all of the structure of the double category described in elementary terms above.  We write $\DBL$ for the category of double categories and double functors.

There are many interesting examples of double categories, such as the double category of sets, functions and relations.  If we allow ourselves to work with \emph{pseudo-double categories} rather than the \emph{strict} double categories considered here, we obtain access to examples involving spans, bimodules and profunctors.  For further information on double categories, we refer the reader to \cite{Shulman,GrandisPare}.

Let us return towards our guiding examples, which are rather simple kinds of double categories, in which each square is determined by its four bounding edges. 

\begin{Exs}\label{examples:maps1}
Given a category $\C$ one can form the double category $\Sq{\C}$ whose objects are those of $\C$, whose vertical and horizontal morphisms coincide as the morphisms of $\C$, and whose squares are simply the commutative squares in $\C$.

Now let $\E$ be a class of morphisms in $\C$ closed under composition and containing the identities.  Then we can form a double category $\dcat{D}(\E)$ whose objects and horizontal arrows are those of $\C$, whose vertical arrows are those of $\E$ and whose squares are again just commutative squares as depicted below.
$$\xy
(0,0)*+{A}="b0"; (15,0)*+{C}="c0"; (0,-15)*+{B}="d0"; (15,-15)*+{D}="e0";
{\ar ^{r} "b0";"c0"};
{\ar _{e \in \E} "b0";"d0"};
{\ar ^{e' \in \E}"c0";"e0"};
{\ar_{s} "d0";"e0"};
\endxy$$
This comes equipped with a forgetful double functor $U \colon  \dcat{D}(\E) \to \Sq{\C}$ which is the identity on objects and horizontal morphisms.  The reader will observe that this double category encodes both of the categories associated to $\E$ mentioned at the beginning of this section.
\end{Exs}

Central to our concerns will be double functors of the form $U \colon  \dcat{J} \to \Sq{\C}$.  Written internally to $\CAT$, such a double functor gives rise to a diagram

\begin{equation*}
\xymatrix{
  \J_1 \ar@<-5pt>[d]_d\ar@<5pt>[d]^c \ar[r]^{U_1} &
  \C^\atwo
  \ar@<-5pt>[d]_d\ar@<5pt>[d]^c \\
  \ar[u]|i \J_0 \ar[r]_{U_0} & \C\rlap{ .} \ar[u]|i }
\end{equation*}

where $\J_0$ is the category of objects and horizontal morphisms, and $\J_1$ the category of vertical morphisms and squares.  It is often the case that $U_0$ is the identity functor and $U_1$ is faithful, in which case, following \cite{Bourke2016Accessible}, we call it a \emph{concrete} double category over $\C$.  

Of course $U \colon  \dcat{D}(\E) \to \Sq{\C}$ of Examples~\ref{examples:maps1} is a concrete double category over $\Sq{\C}$.  In fact, in this example $U_1$ is \emph{fully faithful} --- this corresponds to the fact that the vertical morphisms of $\dcat{D}(\E)$ are simply morphisms of $\C$ satisfying a \emph{property}.  Relaxing fully faithfulness to faithfulness allows us to capture morphisms of $\C$ \emph{equipped with structure}.  

\black

\begin{Exs}\label{examples:maps2}
There are many interesting structured examples over $\Cat$, arising from the different flavours of adjunction and fibration, and we will focus on two of these.

By a \emph{split reflection}, we mean a functor $u\colon  A \to B$ together with an adjunction $f \dashv u$ with identity counit --- in particular $fu=1$.  These are the vertical morphisms of a concrete double category $\dcat{SplRef}(\Cat) \to \Sq{\Cat}$ whose squares are commutative squares commuting with the adjunctions strictly. Composition of adjunctions (restricted to split reflections) provides the vertical composition of the double category.

A \emph{cloven fibration} $(f,\theta)$ is a Grothendieck fibration $u\colon  A \to B$ together with a cleavage --- that is, for each $f\colon b \to ua$, a choice of $u$-cartesian lifting $\theta_{a,f}\colon a^{*}f \to a$.  It is, moreover, a \emph{split fibration} if $\theta$ is compatible with identities and composition in $B$.  Split fibrations form the vertical morphisms of a concrete double category $\dcat{SplFib}(\Cat) \to \Sq{\Cat}$ whose squares are commutative squares preserving the chosen cartesian liftings.  Composition of split fibrations provides the vertical composition of the double category.
\end{Exs}

\subsection{Lifting operations for double categories of maps}\label{sect:lifting}

Consider again two classes of morphisms $\E, \M$ of $\C$ closed under composition and identities, and consider the associated concrete double categories $U\colon \dcat{D}(\E) \to \Sq{\C}$ and $V\colon \dcat{D}(\M) \to \Sq{\C}$ over $\C$.  In these terms, orthogonality of $\E$ and $\M$ says that given vertical morphisms $e\colon A \to B \in \dcat{D}(\E)$, $m\colon X \to Y \in \dcat{D}(\M)$ and a commutative square
\begin{equation*}
\cd{ UA \ar[d]_{Ue} \ar[r]^-{u} &  VX \ar[d]^{Vm}  \\
UB \ar@{.>}[ur]|{\exists !} \ar[r]^-{v}  &VY}
\end{equation*}
there exists a unique diagonal filler, as depicted.  

Moving to more structured examples, such as the split reflections and split fibrations of Example~\ref{examples:maps2}, the diagonal filler will no longer be unique.  Nonetheless, there will be a canonical choice of filler and these canonical fillers will satisfy a number of compatibilities (which are automatic in the orthogonal case).  This structured system of fillers is encapsulated by the following notion of lifting operation, first defined in \cite{Bourke2016Accessible}.\black

Given $U \colon  \dcat{L} \to \Sq{\C}$ and $V \colon  \dcat{R} \to
\Sq{\C}$, an \emph{$(\dcat{L},\dcat{R})$-lifting operation} is a family of functions $\phi_{j,k}$, indexed by vertical arrows $j \colon  A \to B \in \dcat{L}$
and vertical arrows $k \colon  X \to Y \in \dcat{R}$, which assign to each commuting square
\begin{equation*}
\cd{ UA \ar[d]_{Uj} \ar[r]^-{u} &  VX \ar[d]^{Vk}  \\
UB \ar@{.>}[ur]|{\phi_{j,k}(u,v)} \ar[r]^-{v}  &VY}
\end{equation*}
a diagonal filler, as indicated, making both triangles commute.  These diagonal fillers are required to satisfy:
\begin{itemize}
\item \emph{Horizontal compatibilities:} the diagonal fillers are required to be natural in squares of $\dcat{L}$ and $\dcat{R}$ --- for $\dcat{L}$, this says that given a morphism $r\colon i \to j \in \L_1$, we have the equality of diagonals in \[
\cd{UA \ar[d]_{Ui} \ar[r]^-{Ur_0}  & UC \ar[d]_{Uj} \ar[r]^s & VX  \ar[d]^{Vk} \\ 
UB \ar[r]_-{Ur_1}  & UD \ar@{.>}[ur]|{\phi_{j,k}} \ar[r]_t & VY} 
  \quad = \quad 
  \cd{UA \ar[d]_{Ui} \ar[rr]^-{s.Ur_0}  && VX \ar[d]^{Vk} \\ 
  UB \ar@{.>}[urr]|{\phi_{i,k}} \ar[rr]_-{t.Ur_1}  && VY}
\]
where we have omitted certain labels for $\phi$.  Naturality in squares of $\dcat{R}$ gives the corresponding condition on the right.
\item \emph{Vertical compatibilities:} the diagonal fillers are required to respect composition of vertical morphisms in $\dcat{L}$ and $\dcat{R}$ --- given such a composable pair $j \circ i\colon A \to B \to C \in \dcat{L}$, this says that we have the equality of diagonals  from bottom left to top right in
\begin{equation*}
\cd[@-0.6em@C+1.2em]{
  UA \ar[d]_{Ui} \ar[rr]^s && VX \ar[dd]^{Vk} \\
  UB \ar[d]_{Uj}  \ar@{.>}[urr]^{{\phi_{i,k}}}\\
  UC \ar[rr]_-{t}  \ar@{.>}[uurr]_{{\phi_{j,k}}} &&  VY }
  \quad = \quad
  \cd[@-0.6em@C+1.2em]{
  UA \ar[d]_{Ui} \ar[rr]^s && VX \ar[dd]^{Vk} \\
  UB \ar[d]_{Uj} \\
  UC \ar[rr]_-{t}  \ar@{.>}[uurr]_{{\phi_{j \circ i,k}}} &&  VY }
\end{equation*}
whilst respecting vertical composition in $\dcat{R}$ concerns the corresponding  condition but with the composite on the right.
\end{itemize}

The assignation sending $\dcat{L}$ and $\dcat{R}$ to the collection of $(\dcat{L}, \dcat{R})$-lifting operations underlies a functor $$\dcat{Lift} \colon  (\DBL / \Sq{\C})^\op \times (\DBL / \Sq{\C})^\op \to \SET \hspace{0.1cm} .$$
 Given $F\colon \dcat{L} \to \dcat{L'}$ and $G\colon \dcat{R} \to \dcat{R'}$ in $\DBL / \Sq{\C}$, this sends an $(\dcat{L'},\dcat{R'})$-lifting operation $\phi$ to the $(\dcat{L},\dcat{R})$-lifting operation $\phi_{F,G}$ with $(\phi_{F,G})_{j,k} = \phi_{Fj,Gk}$ for $j$ a vertical morphism of $\dcat{L}$ and $k$ a vertical morphism of $\dcat{R}$.

 The functors $\dcat{Lift}(\dcat{L}, \thg)$ and $\dcat{Lift}(\thg, \dcat{R})$ are represented by concrete double categories $\LLP(\dcat{L}) \to \Sq{\C}$ and $\RLP(\dcat{R}) \to \Sq{\C}$ respectively.   A vertical morphism of $\LLP(\dcat{L})$ consists of a pair $(f,\phi)$ where $f$ is a morphism of $\C$ and $\phi$ is an $(\dcat{L},f)$-lifting operation\begin{footnote}{Here $f$ is considered as a double-functor $f\colon \atwo_v \to \Sq{\C}$ from the free double category containing a vertical arrow.}\end{footnote} as depicted below left
\begin{equation*}
\cd{ UA \ar[d]_{Uj} \ar[r]^-{u} &  X \ar[d]^{f}  \\
UB \ar@{.>}[ur]|{\phi_{j}(u,v)} \ar[r]_-{v}  & Y}
\hspace{2cm}
\cd{ X \ar[d]_{f} \ar[r]^-{u} &  VC \ar[d]^{Vk}  \\
Y \ar@{.>}[ur]|{\phi_{k}(u,v)} \ar[r]_-{v}  &VD}
\end{equation*} 
whilst a vertical morphism of $\RLP(\dcat{R})$ consists of a pair $(f,\phi)$ where $f$ is a morphism of $\C$ and $\phi$ is an $(f,\dcat{R})$-lifting operation, as depicted above right.  In either case, the squares $(f,\phi) \to (g,\theta)$ are the commutative squares $f \to g$ commuting with the lifting operations $\phi$ and $\theta$.

The representations are described by natural bijections
$$\dcat{Lift}(\dcat{L}, \dcat{R}) \cong \DBL/\Sq{\C}(\dcat{L},\LLP(\dcat{R})): \phi \mapsto \phi_l$$
and 
$$\dcat{Lift}(\dcat{L}, \dcat{R}) \cong \DBL/\Sq{\C}(\dcat{R},\RLP(\dcat{L})): \phi \mapsto \phi_r$$
and the composite bijection
$$\DBL/\Sq{\C}(\dcat{L},\LLP(\dcat{R})) \cong \dcat{Lift}(\dcat{L}, \dcat{R}) \cong \DBL/\Sq{\C}(\dcat{R},\RLP(\dcat{L}))$$
induces an adjunction
\begin{equation*} 
\cd[@C+1em]{(\DBL/\Sq{\C})^\op
  \ar@{}[r]|-{\bot} \ar@<-4.5pt>[r]_-{{\RLP}} & \DBL/\Sq{\C}\rlap{ .}
  \ar@<-4.5pt>[l]_-{\LLP}}\label{eq:34}
\end{equation*}
modifying the usual Galois connection between classes of morphisms in a category $\C$.  At $U\colon \dcat{J} \to \Sq{\C}$, the unit and counit are given by morphisms $\eta_{\dcat{J}}\colon \dcat{J} \to \RLP(\LLP(\dcat{J}))$ and $\epsilon_{\dcat{J}}\colon \dcat{J} \to \LLP(\RLP(\dcat{J}))$.

\subsection{Lifting awfs}

We now turn to the definition of \emph{lifting awfs}.  The basic context for a lifting awfs consists of a pair $U \colon  \dcat L \rightarrow \Sq{\C}$ and $V \colon  \dcat R \to \Sq{\C}$ together with an $(\dcat L, \dcat R)$-lifting operation $\phi$.   Henceforth we refer to such a triple $(\dcat L,\phi,\dcat R)$ as a \emph{lifting structure} on $\C$, noting that we abuse notation by leaving the morphisms $U$ and $V$ implicit.

Given a lifting structure $(\dcat L,\phi,\dcat R)$, we can ask whether the following two axioms hold.

\begin{enumerate}
\item \emph{Axiom of lifting:} the induced maps $\phi_l\colon  \dcat L \to \LLP(\dcat R)$ and $\phi_r \colon  \dcat R \to \RLP(\dcat L)$ are invertible;
\item \emph{Axiom of factorisation:} each morphism $f\colon A \to B \in \C$ admits a factorisation 
\begin{equation*}
\xymatrix{
A \ar[r]^{U_{1}g} & C \ar[r]^{V_{1}h} & B}
\end{equation*}
 which is bi-universal, in the sense that $(1_A,V_1 f)\colon U_1 g \to f$ is $U_1$-couniversal and $(U_{1}g,1_B)\colon f \to V_{1}h$ is $V_1$-universal.
\end{enumerate}

\begin{Defn}
A lifting structure $(\dcat L,\phi,\dcat R)$  is
\begin{itemize}
\item a pre-awfs if it satisfies the axiom of lifting;
\item a lifting awfs if it satisfies the axioms of lifting and factorisation.
\end{itemize}
\end{Defn}

The definition of a pre-awfs is motivated by the notion of a pre-factorisation system \cite{Freyd72}, which consists of two classes of maps $\E$ and $\M$ satisfying $\E = {}^{\perp}\M$ and $\M = \E^{\perp}$.  We remark that the usual definition of awfs, presented in Section~\ref{sec:algebr-weak-fact}, does not easily lead to a definition of pre-awfs, since it has the factorisations built in as primitive structure.

Lifting awfs are the central notion of this paper, and we will prove their equivalence with awfs in Theorem~\ref{thm:equivalence}.  We note that, in fact, there is some redundancy in the definition of a lifting awfs. Indeed it suffices to ask that the factorisation has \emph{either} universal property --- the other one then following automatically.  See Proposition~\ref{prop:redundant}.

\begin{Exs}\label{ex:second}
Our first goal is to convince the reader that the notion of lifting awfs is natural as a standalone concept, and to this end we now give a couple of guiding examples.  Both were considered as awfs in the paper that introduced the concept \cite{Grandis2006}.  The second example, though simple enough, displays all of the structure of a lifting awfs in a non-trivial way.

\begin{enumerate}[(i)]
\item Let $(\E,\M)$ be a pre-factorisation system on $\C$ and $\dcat{D}(\E)$ and $\dcat{D}(\M)$ be the corresponding concrete double categories of Example~\ref{examples:maps1}.  By orthogonality of $\E$ and $\M$, there exists a unique $(\dcat{D}(\E),\dcat{D}(\M))$-lifting operation $\phi$ and we will show that this makes $(\dcat{D}(\E),\phi,\dcat{D}(\M))$ a pre-awfs.  

To this end, let us first show that the induced map $\phi_r \colon \dcat{D}(\M) = \dcat{D}(\E^{\perp}) \to \RLP(\dcat{D}(\E))$ equipping a morphism with the unique choice of liftings is invertible.  So suppose that $(f,\phi) \in \RLP(\dcat{D}(\E))$.  Then we obtain a diagonal filler in the square below left and we claim it is the unique such filler.
\begin{equation*}
\xymatrix{
A \ar[d]_{e \in \E} \ar[r]^{r} & C \ar[d]^{f} \\
B \ar[ur]|{\phi_e(r,s)}  \ar[r]_{s} & D}
\hspace{1cm}
\xymatrix{
A \ar[d]_{e \in \E} \ar[r]^{e} & B \ar[d]_{1}  \ar[r]^{t} & C \ar[d]^{f} \\
B   \ar[r]_{1} & B \ar[ur]|{\phi_{1}(t,s)}  \ar[r]_{s} & D}
\end{equation*}
Indeed, a second lifting $t$ would induce a diagram as above right.  Now certainly $\phi_{1}(t,s) = t$ since the upper triangle commutes; naturality of $\phi$ in the left square of that diagram hence forces $\phi_e(r,s)=t$, as required.  Hence $\phi_r$ is bijective on vertical morphisms and fully faithfulness on squares follows from the fact that since the diagonal fillers are unique, they are preserved by any commutative square.  A dual argument shows that $\phi_l$ is invertible so that we have verified the axiom of lifting.

If $(\E,\M)$ is a factorisation system on $\C$ then orthogonality of the two classes ensures that the $(\E,\M)$-factorisation $f=m \circ e$ of a morphism gives a coreflection $(1,m)\colon e \to f$ into $\dcat{D}(\E)_1$ and reflection $(e,1)\colon f \to m$ into $\dcat{D}(\M)_1$, establishing the axiom of factorisation.  (That factorisation systems give rise to such (co)reflections in the arrow category was observed already in the 1960's \cite{Ehrbar}.  See also \cite{MacDonald} and \cite{Im}.\black)

\label{item:prefact}  

\item Continuing Example~\ref{examples:maps2}, we now describe in detail the lifting awfs on $\Cat$ whose left and right maps are the split reflections and split fibrations.  This is a close relative of the comprehensive factorisation system \cite{StreetComprehensive}, whose right class consists of the \emph{discrete} fibrations.

To get started, consider a commutative square in $\Cat$
\begin{equation*}
\cd{A \ar[d]_{u} \ar[r]^-{r} &  C \ar[d]^{g}  \\
B \ar@{.>}[ur]|{k} \ar[r]^-{s}  &D}
\end{equation*}
in which $f \dashv u$ is a split reflection and $(g,\theta)$ a split fibration.  We will describe a canonical diagonal filler $k$.  

At an object $b \in B$, applying $s$ to the unit component gives $s\eta_b\colon sb \to sufb=grfb$.  We denote its chosen $g$-cartesian lifting $\theta(s\eta_b,rfb)$ by $\theta_b := \theta(s\eta_b,rfb)\colon kb \to rfb$.  As a lifting, we have that $gkb=sb$ as required.  Since at $a \in A$ we have $\eta_{ua}=1$ and since $(g,\theta)$ is split, the cartesian lifting of the identity $s\eta_{ua}\colon sua=gra$ is the identity on $ra$, so that $kua = ra$.  In particular, both triangles commute at the level of objects.  

Now consider $\alpha\colon b \to b' \in B$.  Using the universal property of the cartesian lifting $\theta_{b'}$, we define $k\alpha$ as the unique morphism making the square
\begin{equation*}
\cd{kb \ar[d]_{\theta_b} \ar[r]^-{k\alpha} &  kb' \ar[d]^{\theta_{b'}}  \\
rfb \ar[r]^-{rf\alpha}  &rfb'}
\end{equation*}
commute and satisfying $gk\alpha = s\alpha$.  For $\alpha = u \beta$, the above uniqueness property implies that $r\beta = ku\beta$ so that both triangles commute.  Uniqueness also implies functoriality of $k$.

Accordingly, we define our $(\dcat{SplRef},\dcat{SplFib})$-lifting operation $\phi$ by $\phi(f \dashv u,(g,\theta) ,r,s)=k$.  We leave it as an exercise to the reader to check the compatibility axioms for a double-categorical lifting operation, noting that the vertical compatibilities make full use of the fact that the given reflections and fibrations are split.

A functor $f\colon A \to B$ has a bi-universal factorisation through the comma category $B/f$ as below.

\begin{equation*}
\xymatrix{
A \ar[r]^-{i_f} & B/f \ar[r]^-{d_f} & B}
\end{equation*}
On objects, the functor $i_f$ sends $a$ to $(1\colon fa \to fa,a)$ whilst $d_f(\alpha\colon b \to fa,a) = b$.  We have a split reflection $c_f \dashv i_f$ where $c_f$ is given on objects by $c_f(\alpha\colon b \to fa,a)=a$ and with unit $\eta\colon 1 \to c_fi_f$ as below left.
\begin{equation*}
\xymatrix{
b \ar[d]_{\alpha} \ar[r]^{\alpha} & fa \ar[d]^{1} \\
fa \ar[r]_{1} & fa
}
\hspace{2cm}
\xymatrix{
c \ar[d]_{\beta} \ar[r]^{\alpha  \beta} & fa \ar[d]^{1} \\
b \ar[r]^{\alpha} & fa
}
\end{equation*}
The functor $d_f$ is a split fibration: given $\beta\colon c \to b = d(\alpha\colon b \to fa,a)$ its chosen cartesian lift is depicted on the right above.  Street \cite{StreetYoneda} observed that with this choice of lifting $d_f\colon B/f \to B$ is indeed the free split fibration: the main point is that each object $(\alpha\colon b \to fa,a)$ of $B/f$ arises from a chosen cartesian lifting (see the diagram above left).  The universal property of the comma category $B/f$ as a limit, allows one to easily construct functors into it, and so verify that $i_f\colon A \to B/f$ is the cofree split reflection.  

To verify the axiom of lifting, we must show that $\phi_l\colon  \dcat{SplRef} \to \LLP(\dcat{SplFib})$  and $\phi_r \colon  \dcat{SplFib} \to \RLP(\dcat{SplRef})$ are invertible --- here, we will only outline the inverses on objects.  Given $(f,\theta) \in \RLP(\dcat{SplRef})$ we must equip $f$ with split fibration structure.  The lifting function $\theta$ provides a chosen lifting as below left --- it ultimately assigns to $(\alpha \colon b \to fa,a)$ the domain of its cartesian lifting.  
\begin{equation*}
\xymatrix{
A \ar[d]_{i_f} \ar[r]^{1} & A \ar[d]^{f} \\
B/f \ar[ur]|{\theta} \ar[r]_{d_f} & B
}
\hspace{2cm}
\xymatrix{
A \ar[d]_{f} \ar[r]^{i_f} & B/f \ar[d]^{d_f} \\
B \ar[ur]|{\theta} \ar[r]_{1} & B
}
\end{equation*}
In fact, the assignment $f \mapsto d_f$ is, as observed by Street \cite{StreetYoneda}, a monad on $\Cat/B$ whose algebras are precisely the split fibrations --- so far we have equipped $f$ with the structure of an algebra for the pointed endofunctor and the vertical compatibilities for $\theta$ allow one to establish compatibility with the multiplication, and so the split fibration structure.

For the inverse on objects to $\phi_l\colon  \dcat{SplRef} \to \LLP(\dcat{SplFib})$ consider $(f,\theta) \in \LLP(\dcat{SplFib})$.  The lifting above right induces a functor $g\colon B \to A$ and natural transformation $\eta\colon 1 \to fg$ such that $\eta f$ is the identity; the other triangle equation for a split reflection can be obtained from the vertical compatibilities for $\theta$; this time it amounts to considering the comonad structure for $f \mapsto i_f$.
\end{enumerate}
\end{Exs}

 For a discussion of homotopical examples, see Examples~\ref{exs:homotopy}.\black

\subsection{The category of lifting awfs on $\C$}

In order to understand the category of lifting awfs on $\C$ cleanly, it is natural to define it as a full subcategory of the category $\Cawfs{\C}$ of \emph{lifting structures} on $\C$.  

This larger category admits a simple definition: indeed, lifting structures $(\dcat L,\phi,\dcat R)$ on $\C$ are simply elements of the bifunctor 

$$\dcat{Lift} \colon  (\DBL / \Sq{\C})^\op \times (\DBL / \Sq{\C})^\op \to \SET$$

from Section~\ref{sect:lifting}.  Accordingly we define $\Cawfs{\C}$ to be the two-sided category of elements of $\dcat{Lift}$.  Its objects are lifting structures, whilst a morphism $(F_l,F_r)\colon (\dcat{L}, \phi,\dcat{R}) \to (\dcat{L'}, \phi',\dcat{R'})$ consists of morphisms $F_l\colon \dcat{L} \to \dcat{L'}$ and $F_r\colon \dcat{R'} \to \dcat{R}$ in $\DBL/\Sq{\C}$ such that 

 \begin{equation}\label{eq:oawfsmap}
\phi'_{F_{l},1} = \phi_{1,F_{r}} \in \dcat{Lift}(\dcat{L},\dcat{R'}) .
\end{equation}
This is equally to say that either of the following two diagrams are commutative
\begin{equation*}
\xymatrix{
\dcat{L} \ar[r]^-{\phi_l} \ar[d]_{F_l}  & \LLP(\dcat{R}) \ar[d]^{\LLP(F_r)} \\
\dcat{L'} \ar[r]_-{\phi'_l} & \LLP(\dcat{R'})
}
\hspace{2cm}
\xymatrix{
\dcat{R'} \ar[r]^-{\phi'_r} \ar[d]_{F_r}  & \RLP(\dcat{L'}) \ar[d]^{\RLP(F_l)} \\
\dcat{R} \ar[r]_-{\phi_r} & \RLP(\dcat{L})
}
\end{equation*}
noting that one commutes iff the other does, since they are adjoint transposes through the adjunction $\LLP \dashv \RLP$.

As a category of elements, $\Cawfs{\C}$ comes equipped with a two-sided discrete fibration

\begin{equation}\label{eq:forgetful}
\xymatrix{
& \Cawfs{\C} \ar[dl]_{\Left{}} \ar[dr]^{\Right{}} \\
\DBL / \Sq{\C} && (\DBL / \Sq{\C})^{op}
}
\end{equation}
where $\Left(\dcat L,\phi,\dcat R) = \dcat L$ and $\Right(\dcat L,\phi,\dcat R) = \dcat R$ and where both of these forgetful functors have the obvious action on morphisms.  

As with any two-sided discrete fibration, $\Left{}$ is a split fibration and $\Right{}$ is a split opfibration.  In particular, given a lifting structure $(\dcat L,\phi,\dcat R)$ and morphism $F\colon \dcat{L'} \to \dcat{L}$ in $\DBL/\Sq{C}$, its cartesian lifting along $\Left{}$ is the morphism of lifting structures $(F,1)\colon (\dcat{L'},\phi_{F,1},\dcat{R}) \to (\dcat L,\phi,\dcat R)$; likewise, given a morphism $G\colon \dcat{R} \to \dcat{R'}$ in $(\DBL/\Sq{C})^{op}$, its opcartesian lift along $\Right{}$ is the morphism $(1,G)\colon (\dcat L,\phi,\dcat R) \to (\dcat L,\phi_{1,G},\dcat R')$.

\begin{Exs}\label{ex:canonical}
Corresponding to the fact that $\dcat{Lift}$ is representable in each variable, we have that each fibre of $\Left{}$ has an initial object and each fibre of $\Right{}$ has a terminal object.  Namely,
\begin{enumerate}
\item Given $\dcat{L} \in \DBL / \Sq{\C}$, we can form the lifting structure $(\dcat{L},can,\RLP(\dcat{L}))$ when $can$ is the canonical double-categorical lifting operation having $can_{l} = \epsilon_{\dcat{L}}$ and $can_{r} = 1$.  Given a second lifting structure $(\dcat{L},\phi,\dcat{R})$ we have the unique morphism $(1,\phi_r)\colon (\dcat{L},can,\RLP(\dcat{L})) \to (\dcat{L},\phi,\dcat{R})$ with first component the identity.
\item Given $\dcat{R} \in \DBL / \Sq{\C}$, we can form the lifting structure $(\LLP({\dcat{R})},can,\dcat{R})$ when $can$ is the canonical double-categorical lifting operation having $can_{l} = 1$ and $can_{r} = \eta_{\dcat{R}}$.  Given a second lifting structure $(\dcat{L},\phi,\dcat{R})$ we have the unique morphism $(\phi_l,1)\colon  (\dcat{L},\phi,\dcat{R}) \to (\LLP({\dcat{R})},can,\dcat{R})$ with second component the identity.
\end{enumerate}
\end{Exs}

\begin{Exs}\label{exs:homotopy}

In practice,  the most common examples of lifting structures (resp. lifting awfs) are the \emph{cofibrantly generated ones}, which combine the two cases of Examples~\ref{ex:canonical}.  These are the lifting structures (resp. lifting awfs) of the form $(\LLP(\RLP({\dcat{L}))},can,\RLP(\dcat{L}))$ for $\dcat{L}$ a \emph{small} double category over $\C$.

For instance, let $\C$ be a Quillen model category and $L$ a generating set of cofibrations in $\C$.  We can view $L$ as a double category $\dcat{L}$ over $\Sq{\C}$ with one vertical morphism for each member of $L$, and only identity horizontal morphisms and squares.  Then the double category of right maps $\RLP(\dcat{L})$ consists of the \emph{algebraic trivial fibrations} \cite{Riehl2011Algebraic} --- that is, morphisms of $\C$ equipped with a choice of lifting against each generating cofibration of $L$.  

There are strong results about when such lifting structures are lifting awfs --- for instance, if $\C$ is locally presentable and $\dcat{L}$ a small double category over $\C$, then $(\LLP(\RLP({\dcat{L}))},can,\RLP(\dcat{L}))$ is always a lifting awfs.  See Proposition~\ref{prop:locallypresentable} of Section~\ref{sect:equivalence}.  In particular, if $\C$ is a \emph{combinatorial model category} --- that is, locally presentable and cofibrantly generated --- this is the case.  We refer the reader to \cite{Riehl2011Algebraic} for further homotopical examples of this kind, expressed as awfs rather than as lifting awfs (of course our main result Theorem~\ref{thm:equivalence} will establish the equivalence of the two notions.)

Let us remark that in such cofibrantly generated settings, it is quite rare that one has a complete understanding of the double category $\LLP(\RLP(\dcat{L}))$ of left maps.  One knows that the left maps include $\dcat{L}$ via the counit $\epsilon_{\dcat{L}}\colon \dcat{L} \to \LLP(\RLP(\dcat{L}))$ and that they inherit various constructions from $\C$, such as coproducts and pushout along a map, but often we only have partial information of this nature.  Obtaining a more complete understanding of $\LLP(\RLP(\dcat{L}))$ in terms of $\dcat{L}$ would be useful, with a starting point being Athorne's \emph{algebraic relative cell complexes} --- see \cite{Athorne} and \cite{AthorneThesis}.

\end{Exs}
\black

We define the categories of pre-awfs and lifting awfs on $\C$ as the full subcategories $\Pawfs{\C}$ and $\Oawfs{\C}$ of $\Cawfs{\C}$ containing the pre-awfs and lifting awfs respectively.  Accordingly, we have inclusions of full subcategories $\Oawfs{\C} \hookrightarrow \Pawfs{\C}$ and $\Pawfs{\C} \hookrightarrow \Cawfs{\C}$. We will frequently use that being a pre-awfs or lifting awfs is an \emph{isomorphism invariant property} of lifting structures.  In other words, $\Oawfs{\C}$ and $\Pawfs{\C}$ are replete full subcategories of $\Cawfs{\C}$.

\begin{Rk}\label{rk:canonical}
Observe that a lifting structure $(\dcat{L}, \phi,\dcat{R})$ is a pre-awfs if and only if both $(1,\phi_r)\colon (\dcat{L},can,\RLP(\dcat{L})) \to (\dcat{L},\phi,\dcat{R})$ and $(\phi_l,1)\colon  (\dcat{L},\phi,\dcat{R}) \to (\LLP({\dcat{R})},can,\dcat{R})$ are invertible.  Therefore, if $(\dcat{L}, \phi,\dcat{R})$ is a pre-awfs/lifting awfs so are $(\dcat{L},can,\RLP(\dcat{L}))$ and $(\LLP({\dcat{R})},can,\dcat{R})$.
\end{Rk}

The forgetful functors \eqref{eq:forgetful} restrict to forgetful functors $\Left{}$ and $\Right{}$ with domain $\Pawfs{\C}$ and, further, to forgetful functors $\Left{}$ and $\Right{}$ with domain $\Oawfs{\C}$.  In fact, when the domain is taken to be $\Pawfs{\C}$, and hence its full subcategory $\Oawfs{\C}$ too, $\Left{}$ and $\Right{}$ become fully faithful.  Before proving this claim, we prove a lemma of independent interest.

\begin{Lemma}\label{lem:fixpoints}
Let $(\dcat{L}, \phi,\dcat{R})$ be a pre-awfs.  Then the counit $\epsilon_{\dcat L}\colon \dcat{L} \to \LLP(\RLP(\dcat{L}))$ and unit $\eta_{\dcat R}\colon \dcat{R} \to \RLP(\LLP(\dcat{R}))$ of the adjunction $\LLP \dashv \RLP$ are invertible.
\end{Lemma}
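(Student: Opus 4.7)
The lemma amounts to an explicit unpacking of the preceding Remark~\ref{rk:canonical}, and my plan is to exploit that correspondence directly. The key first observation is that the nontrivial transposes of the canonical lifting operations from Example~\ref{ex:canonical} are precisely the unit and counit of the adjunction: for $(\dcat{L},can,\RLP(\dcat{L}))$ we have $can_l = \epsilon_{\dcat{L}}$ and $can_r = 1$, while for $(\LLP(\dcat{R}),can,\dcat{R})$ we have $can_l = 1$ and $can_r = \eta_{\dcat{R}}$. Consequently, proving invertibility of $\epsilon_{\dcat{L}}$ and $\eta_{\dcat{R}}$ is the same as proving that these two canonical lifting structures themselves satisfy the axiom of lifting, i.e.\ are pre-awfs.

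To transport the pre-awfs property from $(\dcat{L},\phi,\dcat{R})$ to the canonical lifting structures, I would use the canonical morphisms in $\Cawfs{\C}$ already furnished by Example~\ref{ex:canonical}, namely
\[
(1,\phi_r)\colon (\dcat{L},can,\RLP(\dcat{L})) \to (\dcat{L},\phi,\dcat{R})
\qquad\text{and}\qquad
(\phi_l,1)\colon (\dcat{L},\phi,\dcat{R}) \to (\LLP(\dcat{R}),can,\dcat{R}).
\]
Since the pre-awfs hypothesis provides that $\phi_l$ and $\phi_r$ are invertible in $\DBL/\Sq{\C}$, each of these morphisms has both coordinates invertible, and so is an isomorphism in $\Cawfs{\C}$.

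It remains to invoke the (essentially formal) fact that being a pre-awfs is an isomorphism-invariant property of lifting structures. This is immediate from the defining condition \eqref{eq:oawfsmap} of a morphism in $\Cawfs{\C}$: the two adjoint squares displayed just after it express $\phi'_l$ and $\phi'_r$ as conjugates of $\phi_l$ and $\phi_r$ by $F_l$, $F_r$, and $\LLP(F_r)$, $\RLP(F_l)$; when $(F_l,F_r)$ is invertible all these conjugating morphisms are invertible, so $\phi'_l$ (resp.\ $\phi'_r$) is invertible if and only if $\phi_l$ (resp.\ $\phi_r$) is. Transporting across the two isomorphisms above then shows that both $(\dcat{L},can,\RLP(\dcat{L}))$ and $(\LLP(\dcat{R}),can,\dcat{R})$ are pre-awfs, which by the opening observation is exactly the invertibility of $\epsilon_{\dcat{L}}$ and $\eta_{\dcat{R}}$.

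I do not expect any genuine obstacle here: the argument is a direct formal consequence of the adjunction $\LLP \dashv \RLP$ together with the description of $\Cawfs{\C}$ as the two-sided category of elements of $\dcat{Lift}$. The only step that merits care is checking that $(1,\phi_r)$ and $(\phi_l,1)$ really do satisfy the cocycle condition \eqref{eq:oawfsmap}, but this is the content of Example~\ref{ex:canonical} and follows immediately from naturality of the representing bijections in the $\dcat{R}$ and $\dcat{L}$ variables respectively.
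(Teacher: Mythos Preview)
Your argument is correct, but it is a more roundabout packaging of the same underlying computation the paper uses. The paper's proof simply writes down the triangle identities for the adjoint transposes,
\[
\phi_l = \LLP(\phi_r)\circ \epsilon_{\dcat L}
\qquad\text{and}\qquad
\phi_r = \RLP(\phi_l)\circ \eta_{\dcat R},
\]
and then applies 2-from-3: since $\phi_l$, $\phi_r$, $\LLP(\phi_r)$ and $\RLP(\phi_l)$ are all invertible, so are $\epsilon_{\dcat L}$ and $\eta_{\dcat R}$. Your route through Remark~\ref{rk:canonical} and isomorphism-invariance in $\Cawfs{\C}$ ultimately reduces to exactly these equations (the commutativity square for the morphism $(1,\phi_r)$ \emph{is} the first triangle identity), so the extra layer of the category-of-elements framework is not buying you anything here. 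The paper's version is preferable for this lemma because it is self-contained and does not appeal to the replete-subcategory claim, which in the paper is asserted but not proved in detail.
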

\begin{proof}
As adjoint transposes of one another, the maps $\phi_l$ and $\phi_r$ are related by the commutative diagrams below.
\begin{equation*}\label{eq:triangles}
\xymatrix{
\dcat L \ar[dr]_{\phi_{l}} \ar[r]^-{\epsilon_{\dcat L}} & \LLP(\RLP(\dcat L)) \ar[d]^{\LLP(\phi_r)} \\
& \LLP(\dcat R)
}
\hspace{2cm}
\xymatrix{
\dcat R \ar[dr]_{\phi_r} \ar[r]^-{\eta_{\dcat R}} & \RLP(\LLP(\dcat R)) \ar[d]^{\RLP(\phi_l)} \\
& \RLP(\dcat L)
}
\end{equation*}
By assumption both $\phi_l$ and $\phi_r$ are invertible.  Hence so too are $\LLP(\phi_r)$ and $\RLP(\phi_l)$.  Therefore, by 2-from-3, both $\epsilon_{\dcat L}$ and $\eta_{\dcat R}$ are invertible.
\end{proof}

\begin{Prop}\label{prop:ff1}
The forgetful functors $\Left\colon \Pawfs{\C} \to \DBL/\Sq{\C}$ and \newline $\Right\colon \Pawfs{\C} \to (\DBL/\Sq{\C})^{op}$
are fully faithful.
\end{Prop}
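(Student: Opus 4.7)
The plan is to exploit the axiom of lifting, which makes both $\phi_l$ and $\phi_r$ isomorphisms whenever we are working in $\Pawfs{\C}$. Since the two squares displayed just after \eqref{eq:oawfsmap} are adjoint transposes through $\LLP \dashv \RLP$, a morphism $(F_l,F_r)\colon (\dcat L,\phi,\dcat R) \to (\dcat{L'},\phi',\dcat{R'})$ of lifting structures is entirely determined by a commuting square on either side; and when the relevant $\phi$'s are invertible, one component of such a square determines the other uniquely.

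More concretely, I would fix two pre-awfs $(\dcat L,\phi,\dcat R)$ and $(\dcat{L'},\phi',\dcat{R'})$ on $\C$ and first prove full faithfulness of $\Left{}$. Given any morphism $F_l\colon \dcat L \to \dcat{L'}$ in $\DBL/\Sq{\C}$, observe that to extend it to a morphism of pre-awfs we must produce $F_r\colon \dcat{R'} \to \dcat R$ making the right-hand square of \eqref{eq:oawfsmap} commute, that is, $\phi_r \circ F_r = \RLP(F_l) \circ \phi'_r$. Since $(\dcat L,\phi,\dcat R)$ is a pre-awfs, $\phi_r$ is invertible, so there is a unique such $F_r$, namely
\[
F_r \;=\; \phi_r^{-1} \circ \RLP(F_l) \circ \phi'_r,
\]
which is a morphism in $\DBL/\Sq{\C}$ as a composite of such morphisms. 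This $F_r$ satisfies the compatibility condition by construction, so $(F_l, F_r)$ is a morphism of pre-awfs; uniqueness in the above display gives faithfulness, while the existence gives fullness.

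For $\Right{}$, the argument is symmetric. Given $F_r\colon \dcat{R'} \to \dcat R$, we need $F_l\colon \dcat L \to \dcat{L'}$ making the left-hand square of \eqref{eq:oawfsmap} commute, that is, $\phi'_l \circ F_l = \LLP(F_r) \circ \phi_l$. Since $(\dcat{L'},\phi',\dcat{R'})$ is a pre-awfs, $\phi'_l$ is invertible, yielding the unique solution $F_l = (\phi'_l)^{-1} \circ \LLP(F_r) \circ \phi_l$.

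There is really no obstacle here: the content of the statement is already encoded in the axiom of lifting, and the only subtlety is to remember that the two diagrams after \eqref{eq:oawfsmap} are equivalent (as adjoint transposes) so that commutativity of either one suffices to witness a morphism of lifting structures.
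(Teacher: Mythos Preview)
Your proposal is correct and is essentially the same argument as the paper's, just phrased more directly: the paper works with the \emph{left} square and invokes the $\LLP$-couniversal property of $\phi_l$ (obtained via Lemma~\ref{lem:fixpoints} from the invertibility of $\phi_r$), whereas you work with the \emph{right} square and invert $\phi_r$ directly. Since the two squares are adjoint transposes, these are the same computation seen from opposite sides.
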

\begin{proof}
Let $(\dcat{L}, \phi,\dcat{R})$ and $(\dcat{L'}, \phi',\dcat{R'})$ be pre-awfs, and consider a morphism
\begin{equation*}
\xymatrix{
\dcat{L} \ar[dr]_{U_L} \ar[rr]^{F_l} && \dcat{L'} \ar[dl]^{U_{L'}} \\
& \Sq{\C}
}
\end{equation*}
in $\DBL/\Sq{\C}$.  Following the proof of Lemma~\ref{lem:fixpoints}, the counit $\epsilon_L\colon \dcat{L} \to \LLP(\RLP(\dcat{L}))$ is isomorphic in $\dcat{L}/\LLP{(\thg)}$ to $\phi_l\colon \dcat{L} \to \LLP(\dcat{R})$, which is therefore also $\LLP$-couniversal.  Therefore 
there exists a unique map $F_r\colon \dcat{R'} \to \dcat{R}$ in $\DBL/\Sq{\C}$ such that the square

\begin{equation*}
\xymatrix{
\dcat{L} \ar[r]^-{\phi_l} \ar[d]_{F_l}  & \LLP(\dcat{R}) \ar[d]^{\LLP(F_r)} \\
\dcat{L'} \ar[r]_-{\phi'_l} & \LLP(\dcat{R'})
}
\end{equation*}
commutes.  This proves fully faithfulness of $\Left{}$ and the argument that $\Right{}$ is fully faithful is dual in form.
\end{proof}

\begin{Rk}
Since the adjunction $\LLP \dashv \RLP$ modifies the classical Galois connection, Richard Garner has raised the question of the extent to which the adjunction, like any Galois connection, is idempotent.  This seems unlikely in full generality but no counterexample seems to be known.  

By Lemma~\ref{lem:fixpoints} each pre-awfs gives rise to a pair of $(\LLP \dashv \RLP)$-fixpoints.  In fact idempotency is equivalent to the statement that given $U\colon \dcat{J} \to \Sq{\C}$ both $(\LLP(\dcat{J}),can,\RLP(\LLP(\dcat{J})))$ and $(\LLP(\RLP({\dcat{J}))},can,\RLP(\dcat{J}))$ are pre-awfs.

Certainly there exist lifting structures that are not lifting awfs yet give rise to pairs of $(\LLP \dashv \RLP)$-fixpoints.  To see this, it suffices by the above, to exhibit a pre-awfs which is not a lifting awfs.  To this end, consider a pre-factorisation system $(\E,\M)$ on $\C$ which is not a factorisation system.  An example of such is discussed in Section 3 of \cite{Cassidy}.  Following the argument of Examples~\ref{ex:second}.(\ref{item:prefact}) unchanged, the triple $(\dcat{D}(\E),\phi,\dcat{D}(\M))$ is a pre-awfs but does not satisfy the axiom of factorisation.
\end{Rk}

\section{Background on algebraic weak factorisation systems}\label{sect:awfs}

In this short section we recall the concept of an algebraic weak factorisation system (awfs) and the necessary background about them.  Nothing here is new and our treatment closely follows \cite{Bourke2016Accessible}, to which we refer the reader for further details.

\subsection{Algebraic weak factorisation systems and their morphisms}\label{sec:algebr-weak-fact}
An awfs on $\C$ begins with a
\emph{functorial factorisation}\begin{footnote}{The notion of a functorial factorisation appears to have first appeared in \cite{Linton}.}\end{footnote}: a functor $\C^{\atwo} \to \C^{\mathbf
  3}$ from the category of arrows to that of composable pairs which is
a section for the composition map $\C^{\mathbf 3} \to \C^{\atwo}$. The
action of this functor at an object $f$ or morphism $(h,k) \colon  f
\to g$ of $\C^{\atwo}$ is depicted as on the left or right in:
\[
f = X \xrightarrow{\l f} Ef \xrightarrow{\r f} Y
\qquad \quad \qquad
\cd{
 X
  \ar[r]^{\l f} \ar[d]_{h} & Ef \ar[d]|{E(h, k)} \ar[r]^{\r f} & Y
  \ar[d]^{k} \\
 W \ar[r]^{\l g} & Eg \ar[r]^{\r g} & Z\rlap{ .}}
\]
From these data we obtain endofunctors $L, R \colon  \C^{\atwo} \to
\C^\atwo$, together with natural transformations $\epsilon \colon  L
\Rightarrow 1$ and $\eta \colon  1 \Rightarrow R$ with respective $f$-components:
\begin{equation}
\cd{
A \ar[r]^1 \ar[d]_{\l f} & A \ar[d]^{f} \\
Ef \ar[r]^{\r f} & B
} \qquad \text{and} \qquad
\cd{
A \ar[r]^{\l f} \ar[d]_{f} & Ef \ar[d]^{\r f} \\
B \ar[r]^{1} & B\rlap{ .}
}\label{eq:4}
\end{equation}

An \emph{awfs} $(\mathsf L, \mathsf R)$
on $\C$ is a functorial factorisation as above, together with natural
transformations $\Delta \colon  L \to LL$ and $\mu \colon  RR \to R$
making $\mathsf L = (L,\epsilon, \Delta)$ and $\mathsf R =
(R,\eta,\mu)$ into a comonad and a monad respectively. The monad and
comonad axioms, together with the form~\eqref{eq:4} of $\eta$ and
$\epsilon$, force the components of $\Delta$ and $\mu$ at $f$ to be as
on the left and right in
\[
 \cd{
A \ar[d]_{\l f} \ar[r]^1 & A \ar[d]^{\l{\l f}} \\
Ef \ar[r]^{\c f} & E\l f}
\qquad 
\cd{
Ef \ar[d]_{\l{\r f}} \ar[r]^{\c f} & E\l f
\ar[d]^{\r{\l f}} \\
E\r f \ar[r]^{\m f} & Ef
}
\qquad 
\cd{
E \r f \ar[r]^{\m f} \ar[d]_{\r{\r f}} & Ef \ar[d]^{\r f} \\
B \ar[r]^1 & B\rlap{ ,}}
\]
and imply moreover that the middle square is the component at $f$ of a
natural transformation $\delta \colon  LR \Rightarrow RL$. The final
axiom for an \awfs is that this $\delta$ should constitute a
\emph{distributive law} of
$\mathsf L$ over $\mathsf R$.

A morphism between \awfs $(\mathsf L, \mathsf R)$ and $(\mathsf L', \mathsf R')$ on $\C$ is given by a natural family
of maps $K_f$ rendering commutative:
$$  \cd[@-1em@C-0.5em]{
    & A \ar[dl]_{\l {f}} \ar[dr]^{\lp f} \\
    Ef \ar[rr]^{K_f} \ar[dr]_{\r {f}} & & 
    E'f\rlap{ } \ar[dl]^{\rp f} \\ & B} 
$$
and such that the induced $(1,K) \colon  L \to L'$ and $(K,1) \colon  R \to R'$ are respectively a monad morphism and a comonad morphism.  These form the morphisms of the category $\Awfs{\C}$ of \awfs on $\C$.

\subsection{Double-categorical semantics}\label{sect:double}
Given an \awfs $( \mathsf L, \mathsf R)$ on $\C$ we can consider the
Eilenberg--Moore categories $\Coalg{L}$ and $\Alg{R}$ of coalgebras
and algebras over $\C^{\atwo}$; these are thought of as providing the
respective left and right classes of the awfs. Because $\mathsf L$ is a comonad over the
 domain functor, a coalgebra structure $f \to Lf$ on $f \colon  A \to B$ necessarily has its domain
component an identity, and so is determined by a single map
$s \colon  B \to Ef$; we write such a coalgebra as
$\alg f = (f, s) \colon  A \to B$. Dually, an $\mathsf R$-algebra
structure on $g \colon  C \to D$ is determined by a single map
$p \colon  Ef \to C$, and will be denoted
$\alg g = (g,p) \colon  C \to D$.

An $\mathsf R$-algebra morphism
\begin{equation*}
\cd{
  A \ar[d]_{\alg f} \ar[r]^u & C \ar[d]^{\alg g} \\
  B \ar[r]_{v} & D}\label{eq:5}
\end{equation*}
is a commuting square in $\C$ compatible with the algebra structures
on $\alg f$ and $\alg g$; similar notation and conventions will be
used for $\mathsf L$-coalgebras. 

The $\mathsf R$-algebras admit a composition law, satisfying the compatibilities for a concrete double category $\DAlg{R}$ over $\C$.  The forgetful double functor 
$V^\mathsf R \colon  \DAlg{R} \to \Sq{\C}$, displayed as an internal functor
between internal categories in $\Cat$, is as below.
\begin{equation*}
\xymatrix{
  \Alg{R} \ar@<-5pt>[d]_d\ar@<5pt>[d]^c \ar[r]^{V^\mathsf R} &
  \C^\atwo
  \ar@<-5pt>[d]_d\ar@<5pt>[d]^c \\
  \ar[u]|i \C \ar[r]_1 & \C \ar[u]|i } 
  \end{equation*}

Similarly, the $\mathsf L$-coalgebras constitute a concrete double category $U^{\mathsf L} \colon  \DCoalg{L} \to \Sq{\C}$.

Each morphism $K \colon  (\mathsf L, \mathsf R) \to (\mathsf L', \mathsf R')$ of \awfs on $\C$ induces morphisms of concrete double
categories
\begin{equation*}
\cd{
\DCoalg{L} \ar[dr]_{U^\mathsf L} \ar@{.>}[rr]^{\DCoalg{K}} && \DCoalg{L'}
\ar[dl]^{U^\mathsf{L'}} \\
& \Sq{\C} 
}
\hspace{1cm}
\cd{
\DAlg{R'} \ar[dr]_{U^\mathsf R} \ar@{.>}[rr]^{\DAlg{K}} && \DAlg{R}
\ar[dl]^{U^\mathsf{R'}} \\
& \Sq{\C} 
}
\end{equation*}
and in this way we obtain functors $$\DCoalg{(\thg)} \colon  \Awfs{\C} \to \DBL/\Sq{\C} \hspace{0.5cm} \textnormal{and} \hspace{0.5cm}  \DAlg{(\thg)} \colon  \Awfs{\C}^{op} \to \DBL/\Sq{\C},$$ 
both of which are in fact fully faithful by Lemma 6.9 of \cite{Riehl2011Algebraic}.\begin{footnote}{Lemma 6.9 of \cite{Riehl2011Algebraic} actually proves a stronger result, allowing the category $\C$ to vary.  The stated case of fixed $\C$ trivially follows from this stronger version.}\end{footnote}

The Beck theorem for awfs characterizes those concrete double categories in the essential image of $\DCoalg{(\thg)}$ and $\DAlg{(\thg)}$.  Focusing on the latter, it says that $\DAlg{(\thg)}$ has in its essential image precisely those concrete double categories $U\colon \dcat{A} \to \Sq{\C}$ which are monadic and right-connected.  Here, monadic means that $U_1\colon \A_1 \to \C^{\atwo}$ is strictly monadic.
 Right connected means that the codomain functor $c\colon \A_1 \to \A_0$ be left adjoint to the identities functor $i\colon \A_0 \to \A_1$ with identity counit. (See Section 3.5 of \cite{Bourke2016Accessible} for this formulation of right connectedness).  When $\dcat{A}$ is a concrete double category over $\C$, this simply amounts to the statement that for each vertical morphism $\alg f\colon A \to B$ of $\dcat{A}$ the commutative square $(f,1_B)\colon f \to 1_B$ lifts to a square $\alg f \to \alg 1_B=i(B)$ in $\A$.

\section{The equivalence between awfs and lifting awfs}\label{sect:equivalence}

In this section we construct the semantics functor $$\Sem\colon \Awfs{\C} \to \Oawfs{\C}$$ before proving, in Theorem~\ref{thm:equivalence}, that it is an equivalence of categories.  Using this, we re-interpret the Beck theorem for awfs in terms of lifting awfs and establish a slight redundancy in the axioms for a lifting awfs.

\subsection{The semantics functor}

Let us start by observing that each awfs $(\mathsf L, \mathsf R)$ gives rise to a lifting awfs $(\DCoalg{L},\Phi,\DAlg{R})$.  Here, all of the ingredients are well known.  By Proposition 20 of \cite{Bourke2016Accessible}, there is a canonical $(\DCoalg{L},\DAlg{R})$-lifting operation $\Phi$ satisfying the axiom of lifting.  At an $\mathsf L$-coalgebra $\alg f = (f,s)$, an $\mathsf
R$-algebra $\alg g = (g,p)$ and a commuting square
\begin{equation*}
\cd{
A \ar[d]_{f} \ar[rr]^{u}  && C \ar[d]^{g} \\
B \ar@{.>}[urr]|{\Phi_{\alg f, \alg g}(u,v)} \ar[rr]_{v}  && D} \qquad \qquad
\end{equation*}
the diagonal filler is given by the composite $p \circ E(u,v) \circ s \colon  B \to Ef \to Eg \to C$.  In the factorisation $f=Rf \circ Lf$, the morphisms $(Lf,1)\colon f \to Rf$ and $(1,Rf)\colon Lf \to f$ are the unit and counit of the monad $\mathsf R$ and comonad $\mathsf L$ --- the universal properties of the unit and counit verify the axiom of factorisation.

Each morphism $K\colon (\mathsf L, \mathsf R) \to (\mathsf L', \mathsf R')$ of awfs on $\C$ gives rise to a morphism $$(\DCoalg{K},\DAlg{K})\colon  (\DCoalg{L},\Phi,\DAlg{R}) \to (\DCoalg{L'},\Phi',\DAlg{R'})$$ of lifting awfs.  Indeed, the defining equation ~\eqref{eq:oawfsmap} says that given an $\mathsf L$-coalgebra $(f,r)$, an $\mathsf R'$-algebra $(g,s)$ and a commutative square
\begin{equation*}
\cd{
A \ar[d]_{f} \ar[rr]^{u}  && C \ar[d]^{g} \\
B \ar[rr]_{v}  && D} 
\end{equation*}
the following diagram commutes
\begin{equation*}
\xymatrix{
B \ar[r]^{r} \ar[dr]_{Kf\circ r} & Ef \ar[d]^{Kf} \ar[rr]^{E(u,v)} && Eg \ar[d]_{Kg} \ar[r]^{s\circ Kg} & D \\
& E'f \ar[rr]_{E'(u,v)} && E'g \ar[ur]_{s}
}
\end{equation*}
and this holds by naturality of $K$.  It follows that we obtain a functor $\Sem\colon \Awfs{\C} \to \Oawfs{\C}$ sending $(\mathsf L, \mathsf R)$ to $(\DCoalg{L},\Phi,\DAlg{R})$ and with the above action on morphisms.  All told, we obtain a commutative diagram of categories and functors as below.
\begin{equation}\label{eq:commutative}
\xymatrix{
&& \DBL/\Sq{\C} \\
\Awfs{\C} \ar[drr]_{\DAlg{(\thg)}} \ar[urr]^{\DCoalg{(\thg)}} \ar[rr]^{\Sem} && \Oawfs{\C} \ar[u]_{\Left{}} \ar[d]^{\Right{}} \\
&& (\DBL/\Sq{\C})^{op}
}
\end{equation}

\begin{Prop}\label{prop:ff}
Each of the functors in Diagram~\ref{eq:commutative} is fully faithful.
\end{Prop}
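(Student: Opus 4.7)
The plan is to observe that the five functors appearing in Diagram~\ref{eq:commutative} divide into three groups, only one of which requires genuine work on top of what has already been assembled. The functors $\DCoalg{(\thg)}$ and $\DAlg{(\thg)}$ are fully faithful by the cited Lemma 6.9 of \cite{Riehl2011Algebraic}. The forgetful functors $\Left{}$ and $\Right{}$ out of $\Oawfs{\C}$ are fully faithful because $\Oawfs{\C}$ is a full subcategory of $\Pawfs{\C}$ and their restrictions to $\Pawfs{\C}$ are fully faithful by Proposition~\ref{prop:ff1}; restriction of a fully faithful functor to a full subcategory of its domain is again fully faithful. This leaves only $\Sem$.

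For $\Sem$, I would exploit the commutativity of Diagram~\ref{eq:commutative}, which gives the factorisations
\begin{equation*}
\DCoalg{(\thg)} = \Left{} \circ \Sem \qquad \text{and} \qquad \DAlg{(\thg)} = \Right{} \circ \Sem\rlap{.}
\end{equation*}
Faithfulness of $\Sem$ is immediate from faithfulness of the composite $\Left{} \circ \Sem = \DCoalg{(\thg)}$. For fullness, given a morphism $\alpha\colon \Sem(\mathsf L, \mathsf R) \to \Sem(\mathsf L', \mathsf R')$ in $\Oawfs{\C}$, its image $\Left{}(\alpha)$ under $\Left{}$ lies in the image of $\DCoalg{(\thg)} = \Left{} \circ \Sem$, which is full, so there exists a morphism of awfs $K$ with $\Left{}\Sem(K) = \Left{}(\alpha)$; faithfulness of $\Left{}$ then forces $\Sem(K) = \alpha$.

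I expect no genuine obstacle here, since the proposition is essentially a bookkeeping corollary of Lemma 6.9 of \cite{Riehl2011Algebraic} together with Proposition~\ref{prop:ff1}; the only substantive verification one might want to double-check is the commutativity of Diagram~\ref{eq:commutative}, which comes directly from the construction of $\Sem$ on objects, where $\Left{}\Sem(\mathsf L,\mathsf R) = \DCoalg{L}$ and $\Right{}\Sem(\mathsf L,\mathsf R) = \DAlg{R}$ by definition, and on morphisms, where $\Left{}\Sem(K) = \DCoalg{K}$ and $\Right{}\Sem(K) = \DAlg{K}$ by inspection of the action of $\Sem$ on morphisms just given above the proposition.
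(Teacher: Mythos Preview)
Your proof is correct and follows essentially the same approach as the paper: the diagonals are fully faithful by Lemma~6.9 of \cite{Riehl2011Algebraic}, the verticals by Proposition~\ref{prop:ff1} together with the full inclusion $\Oawfs{\C} \hookrightarrow \Pawfs{\C}$, and then $\Sem$ by cancellation in either commutative triangle. The paper simply invokes ``2-from-3'' for fully faithful functors where you spell out the faithfulness and fullness halves separately, but the argument is the same.
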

\begin{proof}
By Proposition~\ref{prop:ff1}, the forgetful functors $\Left\colon \Pawfs{\C} \to \DBL/\Sq{\C}$ and $\Right\colon \Pawfs{\C} \to (\DBL/\Sq{\C})^{op}$ are fully faithful.  Precomposing these by the full inclusion $\Oawfs{\C} \hookrightarrow \Pawfs{\C}$ ensures that the two vertical morphisms in the diagram are fully faithful.  As recalled in Section~\ref{sect:double}, the two diagonals are fully faithful by Lemma 6.9 of \cite{Riehl2011Algebraic}. Therefore $\Sem{}$ is fully faithful by $2$-from-$3$ applied to either triangle.
\end{proof}

The main result of the paper, Theorem~\ref{thm:equivalence} below, shows that $\Sem\colon \Awfs{\C} \to \Oawfs{\C}$ is an equivalence of categories.  Having established fully faithfulness, it remains to prove essential surjectivity.  Since $\Right{} \colon  \Oawfs{\C} \to (\DBL/\Sq{\C})^{op}$ is fully faithful, essentially surjectivity of $\Sem$ amounts to showing that for each lifting awfs $(\dcat{L}, \phi,\dcat{R})$, there exists an awfs $(\mathsf L, \mathsf R)$ and an isomorphism $\dcat{R} \cong \DAlg{R}$ in $\DBL/\Sq{\C}$ or, equivalently, an isomorphism $\RLP{(\dcat L)} \cong \DAlg{R}$ in $\DBL/\Sq{\C}$.  To establish this will require us to study concrete double categories of the form $V\colon \RLP{(\dcat L)} \to \Sq{\C}$ more closely, and then apply the Beck theorem for awfs.  The key technical result is the following.

\begin{Lemma}\label{lem:Beck}
Consider a double category $U\colon \dcat{L} \to \Sq{\C}$ over $\Sq{\C}$.  Then $V_1\colon \RLP{(\dcat{L})}_1 \to \C^{\atwo}$ creates any colimits that are preserved by $\C^{\atwo}(Uj,-)\colon \C^{\atwo} \to \Set$ for each vertical arrow $j \in \dcat{L}$.  In particular, $V_1$ creates $V_1$-absolute colimits.\begin{footnote}{Here we refer to the strict form of creation --- given  a functor $U\colon \A \to \B$, diagram $D\colon \J \to \A$ and colimit cocone $p\colon UD \to \Delta{x}$, we say that $U$ creates this colimit if, firstly, there exists a unique cocone $p'\colon D \to \Delta(x')$ such that $Up'=p$ and, secondly, the lifted cocone is itself a colimit cocone.}\end{footnote}
\end{Lemma}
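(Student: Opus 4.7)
The plan is to define the required lifting operation on the colimit object by the universal property of the colimit, after which all remaining verifications are essentially formal. Fix a diagram $D\colon \J \to \RLP(\dcat{L})_1$ with $D(i) = (f_i, \phi_i)$ and a colimit cocone $p \colon V_1 D \to \Delta f$ in $\C^{\atwo}$ that is preserved by each $\C^{\atwo}(Uj, \thg)$ for $j$ a vertical arrow of $\dcat{L}$. Write $p_i = (h_i, k_i)\colon f_i \to f$ for the square components of each cocone leg.

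First, for each vertical $j\colon A \to B \in \dcat{L}$, I will exhibit a cocone from the $\Set$-valued diagram $i \mapsto \C^{\atwo}(Uj, f_i)$ to the constant set $\C(UB, \mathrm{dom}(f))$ by sending $(u', v')\colon Uj \to f_i$ to $h_i \circ (\phi_i)_j(u', v')$. The cocone identity for a morphism $\alpha\colon i \to l$ in $\J$ follows from the fact that $D(\alpha)$ preserves chosen fillers (being a morphism in $\RLP(\dcat{L})_1$), combined with the cocone identity for $p$. Since the hypothesis identifies the colimit of the source diagram with $\C^{\atwo}(Uj, f)$, this induces a unique function $\phi_j\colon \C^{\atwo}(Uj, f) \to \C(UB, \mathrm{dom}(f))$ characterised by $\phi_j(p_i \circ (u', v')) = h_i \circ (\phi_i)_j(u', v')$.

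Next I will verify that this $\phi$ is a double-categorical lifting operation in the sense of Section~\ref{sect:lifting}. That each $\phi_j(u, v)$ is a diagonal filler, together with the horizontal and vertical compatibility axioms, all reduce, upon writing $(u, v) = p_i \circ (u', v')$, to the corresponding identities for $\phi_i$ via the defining equation for $\phi$ and the cocone equations for $p$. By construction, each $p_i$ upgrades to a morphism $(f_i, \phi_i) \to (f, \phi)$ in $\RLP(\dcat{L})_1$, and uniqueness of $\phi$ is automatic since any competitor must agree with it on elements of the form $p_i \circ (u', v')$, which cover $\C^{\atwo}(Uj, f)$ by the surjectivity half of the hypothesis.

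Finally, to obtain the universal property in $\RLP(\dcat{L})_1$, I take a competing cocone $(q_i)\colon D \to \Delta(g, \psi)$ in $\RLP(\dcat{L})_1$, extract the unique mediating square $q\colon f \to g$ in $\C^{\atwo}$, and check that its domain component preserves liftings: writing $(u, v) = p_i \circ (u', v')$, both $q_{\mathrm{dom}} \circ \phi_j(u, v)$ and $\psi_j(q \circ (u, v))$ reduce to the domain component of $q_i$ applied to $(\phi_i)_j(u', v')$, once by the defining equation for $\phi$ and once because $q_i$ preserves liftings. The in-particular clause for $V_1$-absolute colimits is immediate since such colimits are preserved by every functor out of $\C^{\atwo}$, including each representable. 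The only real subtlety, rather than a technical obstacle, is to orchestrate the defining equation for $\phi$ so that it both forces the construction and supplies the key identity for every subsequent verification.
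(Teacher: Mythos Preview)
Your proposal is correct and follows essentially the same approach as the paper: both define the lifting function on the colimit object via the universal property of the colimit in $\Set$ (using that $\C^{\atwo}(Uj,\thg)$ preserves it), then verify the filler, horizontal, and vertical compatibility axioms by reducing to the corresponding axioms for the $\phi_i$, and finally check universality by the same reduction. The paper packages the construction slightly differently---it first reformulates objects of $\RLP(\dcat{L})_1$ as sections of the canonical maps $\delta_{Uj,f}\colon \C(UJ_1,F_0)\to\C^{\atwo}(Uj,f)$ and then argues with commutative squares and a cube diagram rather than elementwise---but the content is the same.
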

\begin{proof}
We begin by reformulating what it means to give an object or morphism of $\RLP{(\dcat{L})}_1$ in more convenient terms.  Firstly, observe that given $a\colon A_0 \to A_1$ and $b\colon B_0 \to B_1$ we can form the induced map $$\delta_{a,b}\colon \C(A_1,B_0) \to \C^{\atwo}(a,b)\colon f \mapsto (f\circ a,b\circ f)$$ and to give a section $\theta_{a,b}\colon \C^{\atwo}(a,b) \to \C(A_1,B_0)$ of $\delta_{a,b}$ is precisely to give a diagonal filler in each square on the left below.
\begin{equation*}
\cd{A_0 \ar[d]_{a} \ar[r]^-{r_0} &  B_0 \ar[d]^{b}  \\
A_1 \ar@{.>}[ur]|{\theta_{a,b}(r_0,r_1)} \ar[r]^-{r_1}  &B_1}
\hspace{1cm}
\cd{UJ_0 \ar[d]_{Uj} \ar[r]^-{r_0} &  F_0 \ar[d]^{f}  \\
UJ_1 \ar@{.>}[ur]|{\theta_{j}(r_0,r_1)} \ar[r]^-{r_1}  &F_1}
\hspace{1cm}
\cd{\C^{\atwo}(Uj,f)\ar[d]_{\C^{\atwo}(Uj,r)} \ar[r]^-{\theta_j} &  \C(UJ_1,F_0) \ar[d]^{\C(UJ_1,r_0)}  \\
\C^{\atwo}(Uj,g) \ar[r]^-{\phi_{j}}  &\C(UJ_1,G_0)}
\end{equation*}
Building on this point of view, an $(\dcat{L},f)$-lifting operation as in the central diagram above is specified by sections $\theta_{j}\colon \C^{\atwo}(Uj,f) \to \C(UJ_1,F_0)$ of $\delta_{Uj,f}$ for each vertical arrow $j\colon J_0 \to J_1$ of $\dcat{L}$, subject to vertical and horizontal compatibility of the liftings.  Moreover, given $(f,\theta), (g,\phi) \in \RLP{(\dcat{L})}_1$ a square $r=(r_0,r_1)\colon f \to g$ is a morphism of $\RLP{(\dcat{L})}_1$ just when the square above right
is commutative for each vertical arrow $j \in \dcat{L}$.

With this reformulation in hand, let us consider a diagram $(d,\phi)\colon I \to \RLP{(\dcat{L})}_1$.  Such is specified by morphisms
\begin{equation*}
\cd{ D^a_0 \ar[d]_{(d^a,\phi^a)} \ar[r]^-{D^{\alpha}_0} &  D^b_0 \ar[d]^{(d^b,\phi^b)}  \\
D^a_1 \ar[r]^-{D^{\alpha}_1}  &D^b_1}
\end{equation*}
of $\RLP{(\dcat{L})}_1$ for each $\alpha\colon a \to b \in I$.   Suppose that the colimit $p\colon d \to \Delta(c)$ of the underlying diagram $d\colon I \to \C^{\atwo}$ exists and is preserved by each representable of the form $\C^{\atwo}(Uj,-)$.  Firstly, we must show that $c$ admits a unique structure $(c,\theta) \in \RLP{(\dcat{L})}_1$ such that each $p^a\colon d^a \to c$ preserves the liftings.

Now since $\C^{\atwo}(Uj,-)$ preserves the colimit, the left vertical morphism in the diagram below forms the component of a colimit cocone. 

\begin{equation*}
\cd{\C^{\atwo}(Uj,d^a) \ar[d]_{\C^{\atwo}(Uj,p^a)} \ar[r]^-{\phi^a_j} &\C(UJ_1,D^a_0)  \ar[d]|{\C(UJ_1,p^a_0)}  \ar[r]^{\delta_{Uj,d^a}} & \C^{\atwo}(Uj,d^a) \ar[d]^{\C^{\atwo}(Uj,p^a)}\\
 \C^{\atwo}(Uj,c) \ar@{.>}[r]^-{\exists ! \theta_{j}}  & \C(UJ_1,C_0) \ar[r]^{\delta_{Uj,c}} & \C^{\atwo}(Uj,c)
 }
\end{equation*}

Therefore, by its universal property, there exists a unique $\theta_j$ making the left square commute.  It remains to show that $(c,\theta) \in \RLP{(\dcat{L})}_1$.

To see that $\theta_j$ is a section of $\delta_{Uj,c}$ - i.e. that the composite on the bottom row is an identity --- we again use the fact that the left vertical is the component of a colimit cocone, and that the composite on the top row is an identity.

For the horizontal compatibility of liftings, consider $r\colon j \to k \in \dcat{L}_1$.  We must show that the front face of the cube below commutes.  
\begin{equation*}
\begin{tikzcd}[row sep=2.5em]
\C^{\atwo}(Uj,d^a) \arrow[rr,"\phi^a_j"] \arrow[dr,swap,"(p^a)_*"] \arrow[dd,swap," (Ur)^*"] &&
   \C(UJ_1,D^a_0) \arrow[dd,swap,"(Ur_1)^*" near start] \arrow[dr,"(p^a_0)_*"] \\
&  \C^{\atwo}(Uj,c) \arrow[rr,crossing over,"\theta_j" near start] &&
  \C(UJ_1,C_0)\arrow[dd,"(Ur_1)^*"] \\
\C^{\atwo}(Uk,d^a) \arrow[rr,"\phi^a_k" near end] \arrow[dr,swap,"(p^a)_*"] &&   \C(UK_1,D^a_0) \arrow[dr,swap,"(p^a_0)_*"] \\
& \C^{\atwo}(Uk,c) \arrow[rr,"\theta_k'"] \arrow[uu,<-,crossing over,"(Ur)^*" near end]&& \C(UK_1,C_0)
\end{tikzcd}
\end{equation*}
The back face commutes by assumption.  The two faces on the $(y,z)$-axes commute trivially whilst the two faces on the $(x,z)$-axes commute by construction of $\theta_j$ and $\theta_k$, so that all faces but the front one are known to commute.  It follows by diagram chasing that all paths from top left to bottom right commute --- in particular, the two paths of the front face commute on precomposition with the colimit coprojection $(p^a)_*\colon \C^{\atwo}(Uj,d^a) \to \C^{\atwo}(Uj,c)$.  Therefore, by the universal property of the colimit, the front face commutes too.  

Next, we establish compatibility of the liftings $\theta$ with respect to a composite $k \circ j$ of vertical morphisms in $\dcat{L}$.  To this end, consider a commutative square $(r_0,r_1)\colon Uk  \circ Uj \to c$ and define $\theta_{k,j}(r_0,r_1)$ to be the diagonal $\theta_k(\theta_j(r_0,r_1 \circ Uk),r_1)$ obtained by first lifting against $Uj$ and then against $Uk$.  Similarly, given $(r_0,r_1)\colon Uk \circ Uj \to d^a$, we write $\theta^a_{k,j}(r_0,r_1)$ for the corresponding diagonal, obtained by first lifting against $j$ and then $k$.  Since the colimit coprojections $p^a\colon d^a \to c$ preserve liftings by construction, they also preserve liftings constructed in two steps as above, so that the following diagram is serially commutative.

\begin{equation*}
\xymatrix{
\C^{\atwo}(UkUj,d^a) \ar@<1ex>[rr]^{\theta^a_{k,j}} \ar@<-1ex>[rr]_{\theta^a_{k\circ j}} \ar[d]_{(p^a)_*} && \C^{\atwo}(UK_1,D^a_0) \ar[d]^{(p^a_0)_*} \\
\C^{\atwo}(UkUj,c)\ar@<1ex>[rr]^{\phi_{k,j}} \ar@<-1ex>[rr]_{\phi_{k \circ j}} && \C^{\atwo}(UK_1,C_0)
}
\end{equation*}
As each $(D^a,\theta^a)$ is an object of $\RLP{(\dcat{L})}_1$, the two arrows on the top row coincide.  Therefore, using the universal property of the colimit coprojections in the left vertical, the two arrows on the bottom row coincide, as required.  

This concludes the proof that $p\colon d \to \Delta(c)$  lifts uniquely to a cocone $p\colon (d,\phi) \to (c,\theta)$, and it remains to show that it exhibits $(c,\theta)$ as the colimit.  Given a second cocone $q\colon (d,\phi) \to (f,\rho)$, there exists a unique ${q'}\colon c \to f$ such that $q' \circ p^a = q^a$ for each $a$.  To show that $q'\colon c \to f$ is a morphism in $\RLP{(\dcat{L})}_1$, we must show that the lower square in the diagram below commutes.

\begin{equation*}
\cd{\C^{\atwo}(Uj,d^a) \ar@/_3.5pc/[dd]_{(q^a)_*} \ar[d]^{(p^a)_*} \ar[r]^-{\phi^a_j} &\C(UJ_1,D^a_0)  \ar[d]_{(p^a_0)_*} \ar@/^3.5pc/[dd]^{(q^a)_*}\\
 \C^{\atwo}(Uj,c)\ar[d]^{(q')_*} \ar@{.>}[r]^-{\theta_{j}} &  \C^{\atwo}(UJ_1,C_0) \ar[d]_{(q'_0)_*} \\
  \C^{\atwo}(Uj,f) \ar@{.>}[r]^-{\rho_j} & \C(UJ_1,F_0)  
 }
\end{equation*}
Since the upper square and outer diagram are commutative, this follows on using that the upper left vertical map is a colimit coprojection.

\end{proof}

\begin{Prop}\label{prop:Beck}
Consider a double category $\dcat{L}$ over $\Sq{\C}$.  There exists an awfs $(\mathsf L, \mathsf R)$ and isomorphism $\DAlg{R} \cong \RLP{(\dcat{L})}$ in $\DBL/\Sq{\C}$ if and only if $V_1\colon \RLP{(\dcat{L})}_1 \to \C^{\atwo}$ has a left adjoint.
\end{Prop}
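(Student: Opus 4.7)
The plan is to invoke the Beck theorem for awfs recalled in Section~\ref{sect:double}, which asserts that a concrete double category $U\colon \dcat{A} \to \Sq{\C}$ lies in the essential image of $\DAlg{(\thg)}$ precisely when it is monadic and right-connected over $\C$. The forward implication is then immediate: if $\DAlg{R} \cong \RLP{(\dcat{L})}$ in $\DBL/\Sq{\C}$, then $V_1$ is, up to isomorphism, the forgetful functor $\Alg{R} \to \C^\atwo$, which admits as left adjoint the free $\mathsf R$-algebra functor.

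For the reverse implication, assuming $V_1$ has a left adjoint, it suffices to verify that $V\colon \RLP{(\dcat{L})} \to \Sq{\C}$ is both monadic and right-connected. For monadicity, I will appeal to Beck's precise monadicity theorem: the required left adjoint is given by hypothesis, and Lemma~\ref{lem:Beck} shows that $V_1$ creates $V_1$-absolute colimits, hence in particular $V_1$-split coequalisers, which suffices for strict monadicity.

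For right-connectedness, I will first observe that each identity $1_B \in \C$ carries a unique $(\dcat{L},-)$-lifting structure $\mathrm{can}$, necessarily given by $\mathrm{can}_j(r_0,r_1) = r_1$ (forced by the lower triangle of the diagonal filler condition, and consistent with the upper triangle because of $r_1 \circ Uj = r_0$), with the horizontal and vertical compatibility axioms following trivially. This yields the identities functor $i\colon \C \to \RLP{(\dcat{L})}_1$ sending $B$ to $(1_B,\mathrm{can})$, with functoriality immediate. Right-connectedness then amounts to the statement that for each vertical morphism $(f,\theta) \in \RLP{(\dcat{L})}_1$, the canonical square $(f,1_B)\colon f \to 1_B$ in $\C$ underlies a morphism $(f,\theta) \to (1_B,\mathrm{can})$ in $\RLP{(\dcat{L})}_1$. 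This reduces to checking the equation $f \circ \theta_j(r_0,r_1) = r_1$ for each vertical $j \in \dcat{L}$ and each square $(r_0,r_1)\colon Uj \to f$, which holds because $\theta_j(r_0,r_1)$ satisfies the lower triangle condition of a diagonal filler.

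The substantive step is monadicity, essentially carried by Lemma~\ref{lem:Beck}; the remaining verifications amount to bookkeeping once the identity liftings $\mathrm{can}$ have been identified.
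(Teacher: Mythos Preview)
Your proposal is correct and follows essentially the same approach as the paper: both invoke the Beck theorem for awfs, verify right-connectedness of $\RLP(\dcat{L})$ by exploiting the unique lifting structure on identities, and deduce strict monadicity from Beck's monadicity theorem together with Lemma~\ref{lem:Beck}. Your treatment is slightly more explicit about the forward implication and about the equation witnessing right-connectedness, but the argument is the same.
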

\begin{proof}
By the Beck theorem for awfs, recalled in Section~\ref{sect:double} above, there is an isomorphism $\DAlg{R} \cong \RLP{(\dcat{L})}$ in $\DBL/\Sq{\C}$ if and only if $V\colon \RLP{(\dcat{L})} \to \Sq{\C}$
 is a concrete double category over $\Sq{\C}$ which is right connected and has $V_1\colon \RLP(\dcat{L}) \to \C^{\atwo}$ strictly monadic.  
 
Now $V\colon \RLP{(\dcat{L})} \to \Sq{\C}$ is a concrete double category by construction.  For right connectedness, we must show that for each vertical morphism $(f,\phi)\colon A \to B \in \RLP(\dcat{L})$ the square below left
\begin{equation*}
\xymatrix{
A \ar[r]^-{f} \ar[d]_{(f,\phi)}  & B \ar[d]^{(1_B,!)} \\
B\ar[r]_-{1} & B
}
\hspace{1cm}
\xymatrix{
C \ar[r]^-{r} \ar[d]_{g}  & B \ar[d]^{1_B} \\
D \ar@{.>}[ur]^{s} \ar[r]_-{s} & B
}
\end{equation*}
is a square in $\RLP(\dcat{L})$.  Note that here $1_B\colon B \to B$ is equipped with the unique possible lifting function since each square with target $1_B$ has a unique diagonal filler, as depicted above right.  But since the condition for $(f,1_B)\colon (f,\phi) \to (1_B,!)$ to be a square, is about the equality of two diagonal fillers for a square with target $1_B$, this condition follows from the uniqueness of such diagonal fillers.

Moreover, by Beck's monadicity theorem, $V_1$ will be strictly monadic if and only if it has a left adjoint and creates $V_1$-absolute coequalisers.  By Lemma~\ref{lem:Beck}, $V_1$ always creates $V_1$-absolute coequalisers.  Therefore it is strictly monadic if and only it has a left adjoint, completing the proof.
\end{proof}

\begin{Thm}\label{thm:equivalence}
The semantics functor $\Sem\colon \Awfs{\C} \to \Oawfs{\C}$ is an equivalence of categories.
\end{Thm}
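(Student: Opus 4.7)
The plan is to derive the result as a consequence of Proposition~\ref{prop:ff}, Proposition~\ref{prop:Beck}, and the two axioms defining a lifting awfs. Since Proposition~\ref{prop:ff} already supplies fully faithfulness of $\Sem$, only essential surjectivity requires work. For this, the fully faithfulness of $\Right{}\colon \Oawfs{\C} \to (\DBL/\Sq{\C})^{op}$ (Proposition~\ref{prop:ff1}) allows us to reduce essential surjectivity to a statement purely about the right class: it suffices to show that for every lifting awfs $(\dcat L,\phi,\dcat R)$ on $\C$ there exists an awfs $(\mathsf L,\mathsf R)$ on $\C$ together with an isomorphism $\DAlg{R} \cong \dcat R$ in $\DBL/\Sq{\C}$.

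Next I would invoke the axiom of lifting. Since $\phi_r\colon \dcat R \to \RLP(\dcat L)$ is an isomorphism in $\DBL/\Sq{\C}$, producing $\DAlg{R} \cong \dcat R$ is equivalent to producing an awfs with $\DAlg{R} \cong \RLP(\dcat L)$. This is exactly the situation handled by Proposition~\ref{prop:Beck}, which reduces the problem to showing that $V_1\colon \RLP(\dcat L)_1 \to \C^{\atwo}$ admits a left adjoint.

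The left adjoint will be constructed from the axiom of factorisation. Given $f\colon A \to B$ in $\C$, the axiom supplies a factorisation $f = V_1 h \circ U_1 g$ with $g$ a vertical morphism of $\dcat L$ and $h$ a vertical morphism of $\dcat R$, such that $(U_1 g, 1_B)\colon f \to V_1 h$ is $V_1$-universal. Unwinding the definition, this is the statement that $V_1\colon \dcat R_1 \to \C^{\atwo}$ has a left adjoint with $f \mapsto h$ and unit $(U_1 g, 1_B)$ at $f$. Now transport this adjoint across the isomorphism $\phi_r$: since $\phi_r$ lies over $\Sq{\C}$, its $1$-component is an isomorphism $\dcat R_1 \cong \RLP(\dcat L)_1$ over $\C^{\atwo}$, so the left adjoint above transfers verbatim to a left adjoint of $V_1\colon \RLP(\dcat L)_1 \to \C^{\atwo}$, sending $f$ to $\phi_r(h)$ with the same unit $(U_1 g, 1_B)$.

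There is no real obstacle beyond assembling the pieces: the technically substantive content is entirely absorbed by Proposition~\ref{prop:Beck} (and the underlying Lemma~\ref{lem:Beck}), while Proposition~\ref{prop:ff} takes care of the fully faithful half. What the new axioms contribute is precisely what each pair of arrows needs: the axiom of lifting identifies $\dcat R$ with $\RLP(\dcat L)$ so that Beck's theorem for awfs becomes applicable, and the axiom of factorisation is essentially a reformulation of the left-adjoint condition required by Proposition~\ref{prop:Beck}. Once both are invoked, essential surjectivity — and hence the theorem — follows.
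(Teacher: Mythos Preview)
Your proposal is correct and follows essentially the same route as the paper: fully faithfulness from Proposition~\ref{prop:ff}, then essential surjectivity reduced via the fully faithful $\Right{}$ to finding an awfs with $\DAlg{R}\cong\dcat R\cong\RLP(\dcat L)$, which by Proposition~\ref{prop:Beck} amounts to $V_1\colon\RLP(\dcat L)_1\to\C^{\atwo}$ having a left adjoint, and this is supplied by the axiom of factorisation transported across $\phi_r$. The only cosmetic difference is that the paper cites Proposition~\ref{prop:ff} rather than Proposition~\ref{prop:ff1} for the fully faithfulness of $\Right{}$ on $\Oawfs{\C}$, but since $\Oawfs{\C}\hookrightarrow\Pawfs{\C}$ is full your citation is also valid.
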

\begin{proof}
By Proposition~\ref{prop:ff}, $\Sem$ is fully faithful.  Since, by the same result, $\Right{}$ is fully faithful, essential surjectivity of $\Sem$ amounts to showing that given a lifting awfs $(\dcat{L},\phi,\dcat{R})$ there exists an awfs $(\mathsf L,\mathsf R)$ such that $\dcat{R} \cong \DAlg{R}$ in $\DBL/\Sq{\C}$.  But since $\dcat{R} \cong \RLP(\dcat{L})$ in $\DBL/\Sq{\C}$, this is equally to say that $\RLP(\dcat{L}) \cong \DAlg{R}$ in $\DBL/\Sq{\C}$.  By Proposition~\ref{prop:Beck}, this will be the case if and only if $U_1\colon \RLP(\dcat{L})_1 \to \C^{\atwo}$ has a left adjoint, but it does so since by the definiton of lifting awfs, the isomorphic $U_1\colon \R_1 \to \C^{\atwo}$ has a left adjoint.
\end{proof}

In light of the above theorem, let us revisit the diagram ~\eqref{eq:commutative}, repeated below for the reader's convenience.

\begin{equation*}
\xymatrix{
&& \DBL/\Sq{\C} \\
\Awfs{\C} \ar[drr]_{\DAlg{(\thg)}} \ar[urr]^{\DCoalg{(\thg)}} \ar[rr]^{\Sem} && \Oawfs{\C} \ar[u]_{\Left{}} \ar[d]^{\Right{}} \\
&& (\DBL/\Sq{\C})^{op}
}
\end{equation*}

Since the horizontal functor is an equivalence, we can reinterpret Beck's theorem for awfs to characterise the essential images of the vertical maps.  

\begin{Prop}\label{prop:oBeck}
$U\colon \dcat{R} \to \Sq{\C}$ lies in the essential image of $\Right\colon \Oawfs{\C} \to (\DBL/\Sq{\C})^{op}$ if and only if
\begin{enumerate}
\item $U_0$ is invertible;
\item $U_1$ is strictly monadic;
\item $\dcat{R}$ is right connected.
\end{enumerate}
Indeed, if these conditions are satisfied, then $(\LLP(\dcat{R}),can,\dcat{R})$ is a lifting awfs on $\C$ --- moreover, it is the essentially unique lifting awfs with right part $\dcat{R}$.
\end{Prop}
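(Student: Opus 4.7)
The plan is to reduce the proposition to the Beck theorem for awfs, recalled at the end of Section~\ref{sect:double}, by invoking the main equivalence of the paper. Since Theorem~\ref{thm:equivalence} says that $\Sem$ is an equivalence, the commutative diagram~\eqref{eq:commutative} identifies the essential image of $\Right\colon \Oawfs{\C} \to (\DBL/\Sq{\C})^{op}$ with the essential image of $\DAlg{(\thg)} = \Right \circ \Sem$. By the Beck theorem for awfs, this latter essential image consists of precisely those concrete double categories $\dcat{R}$ over $\Sq{\C}$ which are right connected and whose $U_1$ is strictly monadic.

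Next I would reconcile this characterisation with the three conditions (1)--(3). Recall that ``concrete'' for the paper means both $U_0 = \id$ and $U_1$ faithful. Any strictly monadic functor is faithful, so condition (2) automatically supplies faithfulness of $U_1$. The remaining requirement $U_0 = \id$ is strictly stronger than condition (1), which asks only that $U_0$ be invertible; however, since we are describing an essential image (closed under isomorphism in $\DBL/\Sq{\C}$), any $U$ with $U_0$ invertible is isomorphic, by transport of structure along $U_0^{-1}$, to a double functor $U'$ with $U'_0 = \id$, and this transport preserves right connectedness and strict monadicity of $U_1$. Conversely, any double functor isomorphic to a concrete one with strictly monadic $U_1$ and right-connected total category satisfies (1)--(3), since (1)--(3) are isomorphism invariants. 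This establishes the equivalence between the Beck characterisation and conditions (1)--(3).

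For the final assertions, suppose (1)--(3) hold. The equivalence just established furnishes some lifting awfs $(\dcat{L}, \phi, \dcat{R})$ with right part $\dcat{R}$; then by Remark~\ref{rk:canonical} the triple $(\LLP(\dcat{R}), can, \dcat{R})$ is also a lifting awfs, proving the claimed assertion. For essential uniqueness, suppose $(\dcat{L}'', \phi'', \dcat{R})$ is any lifting awfs with right part $\dcat{R}$. Proposition~\ref{prop:ff} gives that $\Right$ is fully faithful; hence, viewing the identity on $\dcat{R}$ as a morphism in $(\DBL/\Sq{\C})^{op}$, it lifts uniquely to a morphism of lifting awfs between any two such candidates. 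Running this in both directions and invoking uniqueness shows these lifts are mutually inverse isomorphisms, so any two lifting awfs with right part $\dcat{R}$ are canonically isomorphic.

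The only mild obstacle is the bookkeeping distinction between the Beck theorem's notion of a concrete double category (with $U_0$ strictly an identity) and the slightly more flexible condition (1) that $U_0$ be invertible; this is a purely formal matter handled by transport of structure, once one observes that the essential image is closed under isomorphism.
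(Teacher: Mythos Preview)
Your argument is correct and follows essentially the same route as the paper: both reduce to the Beck theorem for awfs via the equivalence $\Sem$, both handle the discrepancy between ``$U_0 = \id$'' and ``$U_0$ invertible'' by passing to the replete closure, and both deduce essential uniqueness from fully faithfulness of $\Right{}$. The one place where the paper is slightly more explicit is in passing from ``$\dcat{R}$ lies in the essential image of $\Right{}$'' to ``there exists a lifting awfs with right part \emph{equal to} $\dcat{R}$'': the paper lifts the isomorphism $G\colon \dcat{R}' \cong \dcat{R}$ along the opfibration $\Right\colon \Cawfs{\C} \to (\DBL/\Sq{\C})^{op}$ to obtain an isomorphism of lifting structures $(\dcat{L},\phi,\dcat{R}') \cong (\dcat{L},\phi_{1,G},\dcat{R})$, whereas you jump directly to the conclusion; but this is the same transport-of-structure manoeuvre you already invoked in the previous paragraph, so there is no real gap.
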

\begin{proof}
Since $\Sem$ is an equivalence, both $\DAlg{(\thg)}$ and $\Right{}$ have the same essential image.  It follows immediately from the Beck theorem for awfs that $\DAlg{(\thg)}$ factors as an equivalence through the full subcategory of $(\DBL/\Sq{\C})^{op}$ containing the right-connected monadic concrete double categories over $\C$.  These do not, however, form a replete full subcategory because the condition for a concrete double category that $U_0$ is the identity is not isomorphism invariant --- rather the repletion, and hence essential image, consists precisely of those $U\colon \dcat{R} \to \Sq{\C}$ satisfying the stated hypotheses.  

Having characterised the essential image, it remains to verify the final claim. By the first part of the result there exists a lifting awfs $(\dcat{L},\phi,\dcat{R'})$ and isomorphism $G\colon \dcat{R'} \cong \dcat{R}$ in $(\DBL/\Sq{\C})^{op}$.  This lifts along the opfibration $\Right\colon \Cawfs{\C} \to (\DBL/\Sq{\C})^{op}$ to the isomorphism $(1,G)\colon (\dcat{L},\phi,\dcat{R'}) \cong (\dcat{L},\phi_{1,G},\dcat{R})$ of lifting structures.  Since the lhs of the isomorphism is a lifting awfs, the rhs $(\dcat{L},\phi_{1,G},\dcat{R})$ is a lifting awfs too.  Since it is, in particular, a pre-awfs, Remark~\ref{rk:canonical} ensures that the map $(\phi'_l,1)\colon  (\dcat{L},\phi',\dcat{R}) \to (\LLP({\dcat{R})},can,\dcat{R})$ is an isomorphism of lifting structures; hence $(\LLP({\dcat{R})},can,\dcat{R})$ is a lifting awfs too.

The final claim about essential uniqueness follows from the fully faithfulness of $\Right{}$, established in Proposition~\ref{prop:ff}.
\end{proof}

The dual conditions characterise the essential image of $\Left\colon \Oawfs{\C} \to \DBL/\Sq{\C}$.  We also give the lifting awfs variant of Proposition~\ref{prop:Beck} and again there is a corresponding dual version.

\begin{Prop}\label{prop:oadjoint}
Consider a double category $\dcat{L}$ over $\Sq{\C}$.  Then the lifting structure $(\LLP(\RLP(\dcat{L})),can,\RLP(\dcat{L}))$ is a lifting awfs if and only if $V_1\colon \RLP{(\dcat{L})}_1 \to \C^{\atwo}$ has a left adjoint.
\end{Prop}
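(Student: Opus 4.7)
The plan is to derive this proposition essentially as a corollary of Proposition~\ref{prop:Beck}, the equivalence Theorem~\ref{thm:equivalence}, and the essential-uniqueness clause of Proposition~\ref{prop:oBeck}.

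For the forward direction, suppose $(\LLP(\RLP(\dcat{L})),can,\RLP(\dcat{L}))$ is a lifting awfs. I would unpack the axiom of factorisation: it guarantees that every morphism $f\colon A \to B$ of $\C$ admits a factorisation $f = V_1 h \circ U_1 g$ in which $(U_1 g, 1_B)\colon f \to V_1 h$ is $V_1$-universal. As $h$ varies over vertical morphisms of $\RLP(\dcat{L})$, this universal property is exactly the defining property of a left adjoint to $V_1\colon \RLP(\dcat{L})_1 \to \C^{\atwo}$, so $V_1$ admits such an adjoint. (Alternatively, one may simply appeal to Theorem~\ref{thm:equivalence} plus Proposition~\ref{prop:Beck}: an equivalent awfs exists, hence $\DAlg{R} \cong \RLP(\dcat{L})$, hence $V_1$ has a left adjoint.)

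For the converse, suppose $V_1$ has a left adjoint. By Proposition~\ref{prop:Beck}, there exist an awfs $(\mathsf L,\mathsf R)$ and an isomorphism $\DAlg{R} \cong \RLP(\dcat{L})$ in $\DBL/\Sq{\C}$. Applying the semantics functor gives the lifting awfs $\Sem(\mathsf L,\mathsf R) = (\DCoalg{L},\Phi,\DAlg{R})$. Transporting along the isomorphism $\DAlg{R} \cong \RLP(\dcat{L})$ via the opfibration $\Right\colon \Cawfs{\C} \to (\DBL/\Sq{\C})^{op}$, one obtains a lifting awfs whose right part is $\RLP(\dcat{L})$, because being a lifting awfs is an isomorphism-invariant property of lifting structures.

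To conclude, I invoke the essential uniqueness clause of Proposition~\ref{prop:oBeck}: there is an essentially unique lifting awfs with a given right part, and that lifting awfs is (up to isomorphism) precisely $(\LLP(\RLP(\dcat{L})),can,\RLP(\dcat{L}))$. Since being a lifting awfs is a replete property, the canonical lifting structure $(\LLP(\RLP(\dcat{L})),can,\RLP(\dcat{L}))$ must itself be a lifting awfs. The only subtlety worth flagging in the write-up is to confirm that the essential uniqueness really applies here---this follows because the transported lifting awfs is in particular a pre-awfs, and Remark~\ref{rk:canonical} then exhibits the required isomorphism with $(\LLP(\RLP(\dcat{L})),can,\RLP(\dcat{L}))$.
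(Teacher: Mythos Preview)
Your argument is correct and uses the same ingredients as the paper, though the paper is more direct. For the converse, rather than invoking the \emph{conclusion} of Proposition~\ref{prop:Beck} to produce an awfs, applying $\Sem$, transporting, and then appealing to Remark~\ref{rk:canonical} and essential uniqueness, the paper simply observes that the \emph{proof} of Proposition~\ref{prop:Beck} already verified that $V\colon \RLP(\dcat{L}) \to \Sq{\C}$ is concrete, right connected, and strictly monadic whenever $V_1$ has a left adjoint; these are exactly the hypotheses of Proposition~\ref{prop:oBeck}, whose ``indeed'' clause then gives that $(\LLP(\RLP(\dcat{L})),can,\RLP(\dcat{L}))$ is a lifting awfs. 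Your route is a valid unwinding of the same logic---in effect you re-derive the last paragraph of Proposition~\ref{prop:oBeck}'s proof---but the paper's one-line version avoids the detour through $\Sem$ and transport. The forward direction, which the paper leaves implicit, you handle correctly via the axiom of factorisation.
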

\begin{proof}
As in the proof of Proposition~\ref{prop:Beck}, if $V_1$ has a left adjoint then $V\colon \RLP{(\dcat{L})} \to \Sq{\C}$ satisfies the conditions of Proposition~\ref{prop:oBeck}, proving the result.
\end{proof}

Proposition~\ref{prop:oadjoint} implies the slight redundancy in the axiom of factorisation.

\begin{Prop}\label{prop:redundant}
A lifting awfs is a pre-awfs such that each morphism $f$ in $\C$ admits a factorisation $f = U_1 g \circ V_1 h$ such that either $(1_A,V_1 f)\colon U_1 g \to f$ is $U_1$-couniversal \emph{or} $(U_{1}g,1_B)\colon f \to V_{1}h$ is $V_1$-universal.
\end{Prop}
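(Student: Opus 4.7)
The plan is to argue in the case where $(U_{1}g, 1_B)\colon f \to V_{1}h$ is assumed $V_1$-universal; the $U_1$-couniversal case then follows by a formally dual argument, using the dual of Proposition~\ref{prop:oadjoint}. The key first observation is that, as $f$ ranges over $\C^{\atwo}$, the hypothesised $V_1$-universal property is precisely the pointwise condition for the functor $V_1\colon \dcat{R}_1 \to \C^{\atwo}$ to admit a left adjoint, with unit at $f$ given by the arrow $(U_1 g, 1_B)\colon f \to V_1 h$ supplied by the factorisation.

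Next, because $(\dcat{L}, \phi, \dcat{R})$ is a pre-awfs, Lemma~\ref{lem:fixpoints} ensures that $\phi_r\colon \dcat{R} \to \RLP(\dcat{L})$ is invertible in $\DBL/\Sq{\C}$. Transferring the left adjoint across $\phi_r$, the functor $V_1\colon \RLP(\dcat{L})_1 \to \C^{\atwo}$ also admits a left adjoint, and so by Proposition~\ref{prop:oadjoint} the canonical lifting structure $(\LLP(\RLP(\dcat{L})), can, \RLP(\dcat{L}))$ is a lifting awfs.

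Finally I would transfer this conclusion back to $(\dcat{L}, \phi, \dcat{R})$ via an isomorphism in $\Cawfs{\C}$. By Lemma~\ref{lem:fixpoints}, the counit $\epsilon_{\dcat{L}}\colon \dcat{L} \to \LLP(\RLP(\dcat{L}))$ is also invertible, and I claim that the pair $(\epsilon_{\dcat{L}}, \phi_r^{-1})$ assembles to an isomorphism $(\dcat{L}, \phi, \dcat{R}) \cong (\LLP(\RLP(\dcat{L})), can, \RLP(\dcat{L}))$ of lifting structures: the compatibility condition~\eqref{eq:oawfsmap} reduces, upon substituting $can_l = 1$, to the equation $\epsilon_{\dcat{L}} = \LLP(\phi_r^{-1}) \circ \phi_l$, i.e.\ to the triangle identity $\phi_l = \LLP(\phi_r) \circ \epsilon_{\dcat{L}}$ already recorded in the proof of Lemma~\ref{lem:fixpoints}. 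Since $\Oawfs{\C}$ is a replete full subcategory of $\Cawfs{\C}$, it then follows that $(\dcat{L}, \phi, \dcat{R})$ is itself a lifting awfs. The only step I anticipate requiring any real thought is the very first one, namely correctly unpacking the $V_1$-universality clause in the weaker factorisation axiom as the pointwise existence of a reflection along $V_1$; once that is in hand, the remaining steps are routine bookkeeping with the canonical maps supplied by Lemma~\ref{lem:fixpoints} and the repleteness of $\Oawfs{\C}$.
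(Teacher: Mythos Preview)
Your proof is correct and follows essentially the same route as the paper's: both reduce to showing that $(\LLP(\RLP(\dcat{L})), can, \RLP(\dcat{L}))$ is a lifting awfs via Proposition~\ref{prop:oadjoint}, and then transfer this back along an isomorphism of lifting structures (the paper does this in two steps via Remark~\ref{rk:canonical}, while you compose these into the single map $(\epsilon_{\dcat{L}}, \phi_r^{-1})$ and verify compatibility directly from the triangle identity). One small slip: the invertibility of $\phi_r$ is the \emph{definition} of a pre-awfs, not a consequence of Lemma~\ref{lem:fixpoints}; that lemma is only needed where you later invoke it, for the invertibility of $\epsilon_{\dcat{L}}$.
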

\begin{proof}
It suffices to prove the case that the factorisations are $V_1$-universal --- the other case is dual.  So let $(\dcat{L},\phi,\dcat{R})$ be a pre-awfs admitting factorisations $f = U_1g \circ V_1 h$ such that $(U_{1}g,1_B)\colon f \to V_{1}h$ is $V_1$-universal.  As a pre-awfs, it follows by Remark~\ref{rk:canonical} that the induced map $(1,\phi_r)\colon (\dcat{L},can,\RLP(\dcat{L})) \to (\dcat{L},\phi,\dcat{R})$ is invertible, so the lhs is also a pre-awfs.  By Remark~\ref{rk:canonical} again, $(\epsilon_{\dcat{L}},1)\colon (\dcat{L},can,\RLP(\dcat{L})) \to (\LLP(\RLP(\dcat{L})),can,\RLP(\dcat{L}))$ is also invertible.  Therefore it suffices to prove that $(\LLP(\RLP(\dcat{L})),can,\RLP(\dcat{L}))$ is a lifting awfs.  Now $V_1\colon \R_1 \to \C^{\atwo}$ has a left adjoint by assumption, hence so too does the isomorphic $V_1\colon \RLP(\dcat{L})_1 \to \C^{\atwo}$ so that $(\LLP(\RLP(\dcat{L})),can,\RLP(\dcat{L}))$ is a lifting awfs by Proposition~\ref{prop:oadjoint}.
\end{proof}

 Let us also mention the following result, which we quoted in Examples~\ref{exs:homotopy}.

\begin{Prop}\label{prop:locallypresentable}
Suppose that $\C$ is locally presentable and $\dcat{L}$ is a small double category over $\Sq{\C}$.  Then the lifting structure $(\LLP(\RLP(\dcat{L})),can,\RLP(\dcat{L}))$ is a lifting awfs.
\end{Prop}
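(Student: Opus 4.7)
The plan is to reduce to Proposition~\ref{prop:oadjoint}: it suffices to show that $V_1\colon \RLP(\dcat{L})_1 \to \C^{\atwo}$ admits a left adjoint. Since $\C$ is locally presentable so too is the arrow category $\C^{\atwo}$, and since $\dcat{L}$ is small there exists a regular cardinal $\lambda$ such that every $Uj$, for $j$ a vertical arrow of $\dcat{L}$, is $\lambda$-presentable in $\C^{\atwo}$. Each representable $\C^{\atwo}(Uj, -)$ therefore preserves $\lambda$-filtered colimits, so by Lemma~\ref{lem:Beck} the functor $V_1$ creates $\lambda$-filtered colimits and is in particular $\lambda$-accessible.

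Next I would verify that $V_1$ also creates small limits, by a direct analogue of the argument of Lemma~\ref{lem:Beck}. Given a diagram $(d,\phi)\colon I \to \RLP(\dcat{L})_1$ with limit $p\colon \Delta(c) \to d$ in $\C^{\atwo}$, the representables $\C^{\atwo}(Uj, -)$ and $\C(UJ_1, -)$ preserve the limit; the cone formed by the liftings $\phi^a_j$ thus induces a unique $\theta_j\colon \C^{\atwo}(Uj, c) \to \C(UJ_1, C_0)$, which is a section of $\delta_{Uj,c}$ and enjoys the horizontal and vertical compatibilities for a lifting operation, all by repeated use of the universal property of the limit. Hence $\RLP(\dcat{L})_1$ is complete and $V_1$ preserves limits.

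To produce the left adjoint to the resulting continuous, accessible functor $V_1$, I would appeal to Garner's algebraic small object argument \cite{Garner2011}: applied to the small double category $\dcat{L}$ over the locally presentable $\C$, it constructs an awfs $(\mathsf{L}, \mathsf{R})$ on $\C$ for which $\DAlg{R} \cong \RLP(\dcat{L})$ in $\DBL/\Sq{\C}$. The forgetful functor $V_1^{\mathsf{R}}\colon \DAlg{R}_1 \to \C^{\atwo}$ is strictly monadic by the Beck theorem for awfs recalled in Section~\ref{sect:double}, and its left adjoint transports across the isomorphism to a left adjoint for $V_1$. The main obstacle is thus encapsulated in the invocation of Garner's construction; an alternative route would be to prove local presentability of $\RLP(\dcat{L})_1$ by hand, by exhibiting it as a small iterated inserter/equifier over $\C^{\atwo}$ and then applying the adjoint functor theorem for locally presentable categories, but verifying accessibility along these lines is appreciably more involved.
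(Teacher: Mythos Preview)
Your approach is essentially the paper's: reduce via Proposition~\ref{prop:oadjoint} to producing a left adjoint for $V_1\colon \RLP(\dcat{L})_1 \to \C^{\atwo}$, and then invoke the established result from the literature that such a left adjoint exists when $\C$ is locally presentable and $\dcat{L}$ is small. Two minor remarks: first, your paragraphs establishing accessibility and continuity of $V_1$ are not used in your main argument---they are only relevant to your alternative route via the adjoint functor theorem, which you yourself defer---so the write-up would be cleaner with them omitted or moved; second, the precise statement you invoke, that Garner's construction applied to a small \emph{double} category $\dcat{L}$ yields an awfs with $\DAlg{R} \cong \RLP(\dcat{L})$, is Proposition~23 of \cite{Bourke2016Accessible} rather than \cite{Garner2011}, which treats categories over $\C^{\atwo}$ and does not directly address the vertical compatibilities encoded in $\RLP(\dcat{L})$.
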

\begin{proof}
This is the lifting awfs version of Proposition 23 of \cite{Bourke2016Accessible}, translated across the equivalence of Theorem~\ref{thm:equivalence}.  Let us summarise the proof.  To prove the claim is, by Proposition~\ref{prop:oadjoint}, to show that $V_1:\RLP{(\dcat{L})}_1 \to \C^{\atwo}$ has a left adjoint.  For this, one uses the local presentability of $\C$ exactly as in the proof of Proposition 23 of \cite{Bourke2016Accessible}.
\end{proof}\black

Until now, we have focused upon the relationship between awfs and lifting awfs over a fixed base $\C$.  We conclude by mentioning the trivial adaptations to a varying base.  Following the terminology of \cite{Bourke2016Accessible}, let $\DBL$ be the 2-category of double categories, double functors and horizontal transformations.  We define an oplax morphism $(\dcat{L},\phi,\dcat{R}) \to (\dcat{L'},\phi',\dcat{R'})$ of lifting awfs simply to be a morphism between the left parts $\dcat{L} \to \Sq{\C}$ and $\dcat{L'} \to \Sq{\D}$ in $\DBL^{\atwo}$.  Defining $2$-cells in the same way, we obtain a $2$-category $\LOplax$ and a fully faithful $2$-functor $\Left\colon \LOplax \to \DBL^{\atwo}$.  As proven in Lemma 6.9 of \cite{Riehl2011Algebraic} (also Proposition 2 of \cite{Bourke2016Accessible}) there is a fully faithful semantics $2$-functor $\DCoalg{(\thg)} \colon  \Oplax \to \DBL^\atwo$.  This factors through $\Left{}$ as a fully faithful $2$-functor $\Sem_{\textnormal{oplax}}\colon \Oplax \to \LOplax$ which, by Theorem~\ref{thm:equivalence}, is essentialy surjective, and so a $2$-equivalence.  A similar result holds for lax morphisms.


\begin{thebibliography}{10}

\bibitem{Athorne}
{\sc Athorne. T.}
\newblock{The coalgebraic structure of cell complexes.}
\newblock {\em Theory and Applications of Categories Vol. 26, No. 11} (2012) 304--330.

\bibitem{AthorneThesis}
{\sc Athorne. T.}
\newblock{Coalgebraic cell complexes.}
\newblock PhD thesis, University of Sheffield, 2014.


\bibitem{Berg2020}
{\sc van den Berg, B. and Faber, E.}
\newblock{Effective Kan fibrations in simplicial sets.}
\newblock {\em Arxiv Preprint 2020, \url{https://arxiv.org/abs/2009.12670}.}

\bibitem{Bourke2016Accessible}
{\sc Bourke, J. and Garner, R.}
\newblock Algebraic weak factorisation systems I: Accessible awfs.
\newblock {\em Journal of Pure and Applied Algebra 220} (2016), 108--147.

\bibitem{Cassidy}
{\sc Cassidy, C., H\'{e}bert, M. and Kelly, G.M.}
\newblock Reflective subcategories, localizations and factorization systems.
\newblock {\em Journal of the Australian Mathematical Society 38}, (1985), 287--329.

\bibitem{Ehresmann}
{\sc Ehresmann, C.}
\newblock Cat\'{e}gories structur\'{e}es.
\newblock {\em Ann. Sci. \'{E}cole Norm. Sup. 80}, (1963), 349--426.

\bibitem{Ehrbar}
{\sc Ehrbar, H. and Wyler, O.}
\newblock On subobjects and images in categories.
\newblock{\em Technical Report 68-34, Dept, Math., Carnegie-Mellon University} (1968), 1--29.

\bibitem{Freyd72}
{\sc Freyd, P. and Kelly, G.M.}
\newblock Categories of continuous functors I.
\newblock{\em Journal of Pure and Applied Algebra 2}, (1972), 169--191.

\bibitem{Gambino2017}
{\sc Gambino, N. and Sattler, C.}
\newblock The Frobenius condition, right properness, and uniform fibrations.
\newblock{\em Journal of Pure and Applied Algebra 221} (2017), 3027--3068.

\bibitem{Garner2011}
{\sc Garner, R.}
\newblock Understanding the small object argument.
\newblock {\em Applied Categorical Structures 17}, 3 (2009), 247--285.

\bibitem{Grandis2006}
{\sc Grandis, M., and Tholen, W.}
\newblock Natural weak factorization systems.
\newblock {\em Archivum Mathematicum 42}, 4 (2006), 397--408.

\bibitem{GrandisPare}
{\sc Grandis, M. and Pare, R.}
\newblock Limits in double categories.
\newblock{\em Cahiers de Topologie et Geom\'{e}trie Diff\'{e}rentielle Cat\'{e}goriques} 40(3), (1999), 162--220.

\bibitem{Im}
{\sc Im, G.B. and Kelly, G.M.}
\newblock On classes of morphisms closed under limits.
\newblock {\em J. Korean Math. Soc. 23(1)}, (1986), 1--18.

\bibitem{Isbell57}
{\sc Isbell, J. R.}
\newblock Some remarks concerning categories and subspaces.
\newblock {\em Canadian Journal of Mathematics 9} (1957), 563--577.

\bibitem{Kennison68}
{\sc Kennison, J.F.}
\newblock {Full reflective subcategories and generalized covering spaces.}
\newblock {\em Illinois Journal of Mathematics 12(3)}, (1968), 353--365.

\bibitem{Linton}
{\sc Linton, F.E.J.}
\newblock Coequalizers in categories of algebras.
\newblock {\em  In Seminar on Triples and Categorical Homology Theory, Lecture Notes in Math. vol. 80, Springer} (1969) 75--90.

\bibitem{MacDonald}
{\sc MacDonald, J. and Tholen, W.}
\newblock {Decomposition of morphisms into infinitely many factors.}
\newblock {\em  In Category Theory. Lecture Notes in Mathematics, vol 962, Springer} (1982) 175--189.

\bibitem{Duality}
{\sc  MacLane, S.}
\newblock {Duality for groups.}
\newblock {\em Bulletin of the American Mathematical Society 56}, (1950), no. 6, 485--516. 

\bibitem{Quillen1967}
{\sc Quillen, D.}
\newblock {\em Homotopical Algebra}, vol.~43 of {\em
  Lecture Notes in Mathematics}.
\newblock Springer, 1967.

\bibitem{Riehl2011Algebraic}
{\sc Riehl, E.}
\newblock {Algebraic model structures.}
\newblock {\em New York Journal of Mathematics 17\/} (2011), 173--231.

\bibitem{Ringel70}
{\sc Ringel, C.M.}
\newblock {Diagonalisierungspaare. I.}
\newblock {\em Mathematische Zeitschrift 117}, (1970), 249--266.

\bibitem{Rosicky2002}
{\sc Rosick\'{y}, J. and Tholen, W.}
\newblock Lax factorization algebras.
\newblock{Journal of Pure and Applied Algebra 175}, (2002), 355--382.

\bibitem{Shulman}
{\sc Shulman, M.}
\newblock Framed bicategories and monoidal fibrations.
\newblock{Theory and Applications of Categories}, Vol. 20, No. 18, 2008, pp. 650--738.

\bibitem{StreetComprehensive}
{\sc Street, R. and Walters, B.}
\newblock The comprehensive factorization of a functor.
\newblock {\em Bull. Amer. Math. Soc. 79(5)\/} (1973), 936--941.

\bibitem{StreetYoneda}
{\sc Street, R.}
\newblock Fibrations and Yoneda's lemma in a 2-category.
\newblock In {\em Category Seminar (Sydney, 1972/1973), vol. 420 of Lecture Notes in Mathematics. 17\/} Springer, 1974, pp. 75--103. 
\end{thebibliography}
\end{document}